\newcommand{\bc}{\begin{center}}
\newcommand{\ec}{\end{center}}
\newtheorem{theorem}{Theorem}
\newtheorem{proposition}{Proposition}
\newtheorem{corollary}{Corollary}
\newtheorem{lemma}{Lemma}
\newtheorem{defi}{Definition}
\begin{document}

\begin{frontmatter}

\title{On global location-domination in graphs}

\author[upc]{C.~Hernando}
\ead{carmen.hernando@upc.edu}

\author[upc]{M.~Mora}
\ead{merce.mora@upc.edu}

\author[upc]{I. M.~Pelayo\corref{cor1}}
\ead{ignacio.m.pelayo@upc.edu}

\cortext[cor1]{Corresponding author}

\address[upc]{Universitat Politècnica de Catalunya, Barcelona, Spain}

\author{}

\address{}

\begin{abstract}
\small 
A dominating set $S$  of a graph $G$ is called \emph{locating-dominating}, \emph{LD-set} for short, if every vertex $v$ not in  $S$ is uniquely determined by the set of neighbors of $v$ belonging to $S$.
Locating-dominating sets of minimum cardinality are called $LD$-codes and the cardinality of an LD-code is the \emph{location-domination number} $\lambda(G)$.
An LD-set $S$ of a graph $G$ is  \emph{global} if it is an LD-set of both $G$ and its complement $\overline{G}$.
The   \emph{global location-domination number} $\lambda_g(G)$ is the minimum cardinality of a global LD-set of $G$.
In this work, we give some relations between  locating-dominating sets and the location-domination number in a graph and its complement.
\end{abstract}

\begin{keyword}
\small Domination  \sep Global domination  \sep  Locating domination \sep Complement graph \sep Block-cactus \sep Trees
\end{keyword}

\end{frontmatter}

\section{Introduction}

Many problems involving detection devices can be modeled with graphs.
Detection devices and the objects or intruders to be detected occupy some vertices of a graph.
We are interested in finding the minimum number of devices needed according to the type of devices and the necessity of locating the intruder.
This gives rise to consider locating and dominating sets.
Locating-dominating sets can be used to determine the location of an object in a graph if devices can detect only objects in its neighborhood and the object cannot occupy the same vertex as detection devices.

Let $G=(V,E)$ be a simple, not necessarily connected, finite graph.
The \emph{open neighborhood} of a vertex $v\in V$ is $N_G(v)=\{u\in V : uv\in E\}$ and the \emph{close neighborhood} is $N_G[v]=\{u\in V : uv\in E\}\cup \{ v \}$.
The \emph{complement} of a graph $G$,  denoted by $\overline{G}$, is the  graph on the same vertices such that two vertices are adjacent in $\overline{G}$ if and only if they are not adjacent in $G$.
The distance between vertices $v,w\in V$ is denoted by $d_G(v,w)$.
We write $N(u)$ or $d(v,w)$ if the graph G is clear from the context.
Assume that $G$ and $H$ is a pair of graphs whose vertex sets are disjoint.
The \emph{union} $G+ H$ is the graph with vertex set $V(G)\cup V(H)$ and edge set $E(G)\cup E(H)$.
The \emph{join} $G\vee H$ has $V(G)\cup V(H)$ as vertex set and $E(G)\cup E(H)\cup \{ u v : u\in v(G)\textrm{ and }v\in V(H)\}$ as edge set.
For further notation, see \cite{chlezh11}.

A set $D\subseteq V$ is a \emph{dominating set} if for every vertex $v\in V\setminus D$, $N(v)\cap D\neq\emptyset$.
The \emph{domination number} of $G$, denoted by $\gamma(G)$, is the minimum cardinality of a dominating set of $G$.
A dominating set is \emph{global} if it is a dominating set of both $G$ and its complement graph, $\overline{G}$.
The minimum cardinality of a global dominating set of $G$ is the \emph{global domination number} of $G$, denoted with $\gamma_g (G)$ \cite{brca98,brdu90,sam89}.
If $D$ is a subset of $V$ and $v\in V\setminus D$, we say that  $v$ \emph{dominates}  $D$ if $D\subseteq N(v)$.

A set $S\subseteq V$  is  a \emph{locating set}  if every vertex is uniquely determined by its vector of distances to the vertices in $S$.
The \emph{location number} of $G$  $\beta(G)$ is the minimum cardinality of a locating set of $G$~\cite{hame,ours2,slater75}.

A set $S\subseteq V$ is  a \emph{locating-dominating set}, \emph{LD-set} for short,  if $S$ is a dominating set such that for every two different vertices  $u,v\in V\setminus S$, $N(u)\cap S\neq N(v)\cap S.$
The \emph{location-domination number} of $G$,   denoted by $\lambda (G)$, is the  minimum cardinality of a locating-dominating set.
A locating-dominating set of cardinality $\lambda(G)$ is called an \emph{LD-code}~\cite{slater88}.
Certainly,  every LD-set of a non-connected graph $G$ is  the  union of LD-sets of its connected components and the location-domination number is the sum of the location-domination number of its connected components.
Notice also that a locating-dominating set is both a locating set and a dominating set, and thus $\beta (G)\le \lambda (G)$ and  $\gamma(G)\le \lambda (G)$. 
LD-codes and the location-domination parameter have been intensively studied during the last decade; 
see \cite{bchl,bcmms07,cahemopepu12,clm11,ours3}
A complete and regularly updated list of papers on locating dominating codes is to be found in \cite{lobstein}.

A \emph{block} of a graph is a maximal connected subgraph with no cut vertices.
A graph is a \emph{block graph} if it is connected and each of its blocks is complete.
A connected graph $G$ is a \emph{cactus} if all its blocks are cycles or complete graphs of order at most 2.
Cactus are characterized as those graphs such that two different cycles share at most one vertex.
A \emph{block-cactus} is a connected graph such that each of its blocks is either a  cycle or a complete graph.
The family of block-cactus graphs is interesting because, among other reasons,  it contains all cycles, trees, complete graphs, block graphs, unicyclic graphs and cactus (see Figure \ref{fig.blockcactus}).
Cactus, block graphs, and block-cactus  have been studied  extensively in  different contexts, including the domination one; see \cite{ch06,hs12,ravo98,xsz06,zv98}.

\vspace{.05cm}
\begin{figure}[hbt]
\begin{center}
\includegraphics[height=5.2cm]{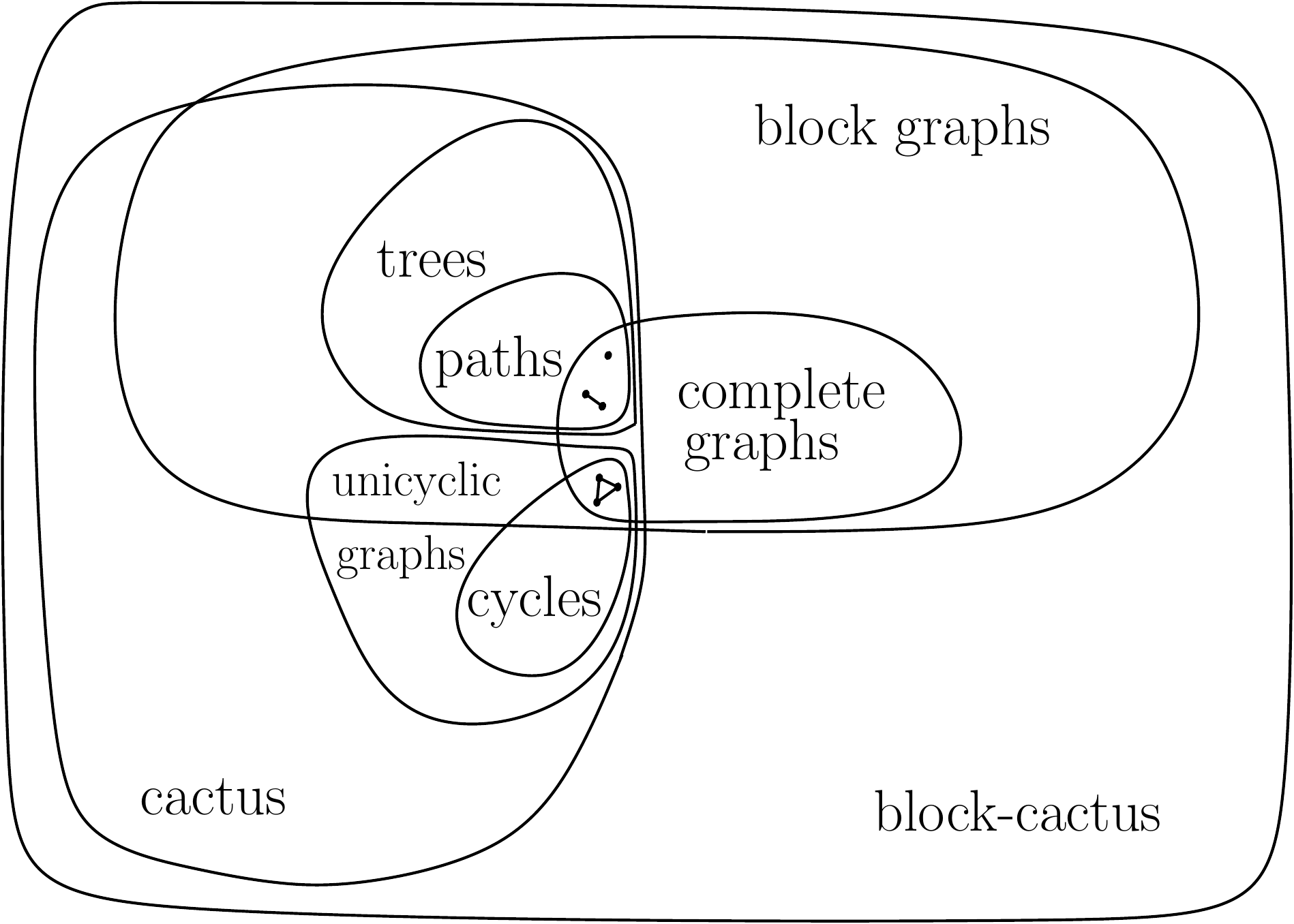}
\caption{Families of block-cactus.}\label{fig.blockcactus}
\end{center}
\end{figure}

The remaining part of this paper is organized as follows. 
In Section 2,  we deal with the problem of relating the  locating-dominating sets and the location-domination number of a graph and its complement. 
Also,  \emph{global LD-sets} and   \emph{global LD-codes} are defined.
In Section 3, we introduce the so-called  \emph{global location-domination number}, and show some basic properties for this new parameter.
In Section 4,  we are concerned with the study  of the sets and parameters  considered in the preceding sections for the  family  of  \emph{block-cactus} graphs.
Finally, the last section is devoted to address some open problems.

\section{Relating   $\lambda (G)$ to $\lambda (\overline{G})$}

This section is devoted to approach the relationship between $\lambda (G)$ and $\lambda (\overline{G})$, for any arbitrary graph $G$.
Some of the results we present were previously shown  in \cite{ours3} and we include them for the sake of completeness.


Notice that $N_{\overline{G}}(x)\cap S = S\setminus N_{G}(x)$ for any set $S\subseteq V$ and any vertex $x\in V\setminus S$. 
A straightforward consequence of this fact is the following lemma.

\begin{lemma} \label{lem.vecinosdistintos}
Let $G=(V,E)$ be a graph and $S\subseteq V$. If $x,y\in V\setminus S$, then
$N_G(x)\cap S \not= N_G(y)\cap S$ if and only if $N_{\overline{G}}(x)\cap S \not= N_{\overline{G}}(y)\cap S$.
\end{lemma}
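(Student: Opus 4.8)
The plan is to reduce both sides of the biconditional to a single statement about complementation of subsets of $S$, using the identity recorded immediately before the statement. First I would rewrite the neighborhood intersection in $\overline{G}$ entirely in terms of the neighborhood in $G$. Since $x\in V\setminus S$, the observation $N_{\overline{G}}(x)\cap S = S\setminus N_G(x)$ applies; moreover $S\setminus N_G(x) = S\setminus(N_G(x)\cap S)$, because the vertices of $N_G(x)$ lying outside $S$ contribute nothing when subtracted from $S$. Thus $N_{\overline{G}}(x)\cap S$ is precisely the relative complement in $S$ of the set $N_G(x)\cap S$, and the same holds with $y$ in place of $x$.

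The key step is then the elementary fact that taking relative complements inside the fixed ground set $S$ is an involution on the collection of subsets of $S$, hence in particular a bijection. Consequently, for any two subsets $A,B\subseteq S$ one has $A=B$ if and only if $S\setminus A = S\setminus B$, equivalently $A\neq B$ if and only if $S\setminus A\neq S\setminus B$.

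To finish I would set $A=N_G(x)\cap S$ and $B=N_G(y)\cap S$. By the first paragraph, $S\setminus A = N_{\overline{G}}(x)\cap S$ and $S\setminus B = N_{\overline{G}}(y)\cap S$. Applying the complementation equivalence to $A$ and $B$ yields directly that $N_G(x)\cap S\neq N_G(y)\cap S$ if and only if $N_{\overline{G}}(x)\cap S\neq N_{\overline{G}}(y)\cap S$, which is the claim.

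Since the argument is an immediate consequence of the preliminary identity, there is essentially no serious obstacle here; the only point demanding a little care is to keep the complement relative to $S$ rather than to $V$, so that the part of each neighborhood lying outside $S$ is correctly discarded before complementing. This is exactly where the hypothesis $x,y\in V\setminus S$ enters, guaranteeing that the identity $N_{\overline{G}}(x)\cap S = S\setminus N_G(x)$ is valid.
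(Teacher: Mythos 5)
Your proof is correct and follows exactly the paper's route: the paper derives the lemma as a ``straightforward consequence'' of the identity $N_{\overline{G}}(x)\cap S = S\setminus N_{G}(x)$ for $x\in V\setminus S$, which is precisely the identity you use, together with the (correctly observed) fact that relative complementation within $S$ is a bijection on subsets of $S$. Your write-up simply makes explicit the details the paper leaves implicit, including the point where the hypothesis $x,y\in V\setminus S$ is needed.
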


As an immediate consequence of this lemma, the following result is derived.

\begin{proposition} \label{pro.domi}
If $S\subseteq V$ is an LD-set of a graph $G=(V,E)$, then $S$ is an LD-set of $\overline{G}$ if and only if $S$ is a dominating set of $\overline{G}$.
\end{proposition}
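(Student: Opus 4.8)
The plan is to unpack the definition of an LD-set into its two constituent requirements --- being a dominating set and satisfying the locating (separation) condition --- and then to observe that Lemma \ref{lem.vecinosdistintos} already disposes of the second requirement automatically, leaving domination as the only genuine content of the equivalence.

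First I would dispatch the forward implication, which is immediate: by the very definition of an LD-set, any LD-set of $\overline{G}$ is in particular a dominating set of $\overline{G}$, so there is nothing to prove in that direction. For the reverse implication I would assume that $S$ is an LD-set of $G$ and, in addition, a dominating set of $\overline{G}$, and then verify the two defining conditions of an LD-set for $\overline{G}$. The domination condition in $\overline{G}$ holds by hypothesis. For the separation condition, I would take any two distinct vertices $x,y\in V\setminus S$; since $S$ is an LD-set of $G$ we have $N_G(x)\cap S\neq N_G(y)\cap S$, and applying Lemma \ref{lem.vecinosdistintos} yields $N_{\overline{G}}(x)\cap S\neq N_{\overline{G}}(y)\cap S$. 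Thus $S$ separates every pair of distinct vertices outside $S$ in $\overline{G}$, and combined with domination this makes $S$ an LD-set of $\overline{G}$.

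I do not anticipate a real obstacle: the essential point is the conceptual observation that the locating/separation property is symmetric between $G$ and $\overline{G}$, which is precisely the content of Lemma \ref{lem.vecinosdistintos}. Once that lemma is available, the whole statement collapses to the remark that domination is the only one of the two LD-set conditions that need not carry over from $G$ to $\overline{G}$. The only point requiring a little care is to apply the separation argument to distinct pairs $x\neq y$ only, in accordance with the definition of the locating condition.
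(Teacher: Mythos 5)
Your proposal is correct and matches the paper's argument: the paper derives this proposition as an immediate consequence of Lemma \ref{lem.vecinosdistintos}, exactly as you do, with the separation condition transferring automatically between $G$ and $\overline{G}$ and domination remaining the only substantive condition. Your write-up simply makes explicit the routine verification that the paper leaves implicit.
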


\begin{proposition} [\cite{ours3}]\label{pro.vertexdom}
If $S\subseteq V$ is an LD-set of a graph $G=(V,E)$, then $S$ is an LD-set of $\overline{G}$ if and only if there is no vertex in $V\setminus S$ dominating $S$ in $G$.
\end{proposition}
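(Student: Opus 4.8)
The plan is to reduce the statement to Proposition~\ref{pro.domi}, which already establishes that an LD-set $S$ of $G$ is an LD-set of $\overline{G}$ precisely when $S$ is a dominating set of $\overline{G}$. Granting that result, it suffices to prove the purely set-theoretic equivalence that $S$ dominates $\overline{G}$ if and only if no vertex of $V\setminus S$ dominates $S$ in $G$; the proposition then follows immediately by chaining the two equivalences.

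To establish this equivalence, I would simply unwind the definition of domination in $\overline{G}$ and translate it into the language of $G$ by means of the identity $N_{\overline{G}}(x)\cap S = S\setminus N_G(x)$, recorded just before Lemma~\ref{lem.vecinosdistintos} and valid for every $x\in V\setminus S$. By definition, $S$ is a dominating set of $\overline{G}$ exactly when every vertex $x\in V\setminus S$ has a neighbor in $S$ within $\overline{G}$, that is, $N_{\overline{G}}(x)\cap S\neq\emptyset$. Substituting the identity rewrites this condition as $S\setminus N_G(x)\neq\emptyset$, or equivalently $S\not\subseteq N_G(x)$, for every $x\in V\setminus S$.

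It then remains only to match this with the notion of a vertex dominating $S$: by the definition given in the introduction, a vertex $x\in V\setminus S$ dominates $S$ in $G$ means precisely $S\subseteq N_G(x)$. Hence asserting $S\setminus N_G(x)\neq\emptyset$ for all $x\in V\setminus S$ is nothing but asserting that no vertex of $V\setminus S$ dominates $S$ in $G$. Combining this with Proposition~\ref{pro.domi} gives the stated biconditional.

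I do not expect any genuine obstacle here, as the whole argument is a sequence of definitional rewrites resting on the complement-neighborhood identity. The only point requiring mild care is that this identity is used for vertices outside $S$; this is automatically satisfied, since throughout we quantify over $x\in V\setminus S$, so the equivalence goes through without further hypotheses.
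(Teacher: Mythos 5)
Your proposal is correct and follows essentially the same route as the paper's own proof: both reduce the statement to Proposition~\ref{pro.domi} and then unwind the definition of domination in $\overline{G}$ via the identity $N_{\overline{G}}(x)\cap S = S\setminus N_G(x)$ to conclude that $S$ dominates $\overline{G}$ exactly when no vertex of $V\setminus S$ satisfies $S\subseteq N_G(x)$. The only cosmetic difference is that you invoke the complement-neighborhood identity explicitly, whereas the paper states the equivalent condition $N_G(u)\cap S\neq S$ directly.
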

\begin{proof}
By Proposition \ref{pro.domi}, $S$ is an LD-set of $\overline{G}$ if and only if $S$ is a dominating set of $\overline{G}$. But $S$ is a dominating set of $\overline{G}$ if and only if $N_{\overline{G}}(u)\cap S\not= \emptyset$, for any vertex $u\in V\setminus S$. This condition is equivalent to $N_G(u)\cap S\not= S$ for any vertex $u\in V\setminus S$. Therefore, $S$ is an LD-set of $\overline{G}$ if and only if there is no vertex $u\in V\setminus S$ such that $S\subseteq N_G(u)$, that is, there is no vertex in $V\setminus S$ dominating $S$.
\end{proof}


\begin{proposition} [\cite{ours3}]\label{pro.union}
If $S\subseteq V$ is an LD-set of a graph $G=(V,E)$ then there is at most one vertex $u\in V\setminus S$ dominating $S$, and in the case it exists, $S\cup \{ u \}$ is an LD-set of $\overline{G}$.
\end{proposition}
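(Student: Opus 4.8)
The plan is to treat the two assertions separately. For the uniqueness of a dominating vertex I would argue directly from the locating property of $S$, and for the second claim I would reduce to the characterization already established in Proposition~\ref{pro.vertexdom}, after first checking that enlarging an LD-set keeps it an LD-set.

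First I would prove that at most one vertex of $V\setminus S$ dominates $S$ in $G$. Recall that a vertex $w\in V\setminus S$ dominates $S$ exactly when $S\subseteq N_G(w)$, i.e. $N_G(w)\cap S=S$. Suppose, for contradiction, that two distinct vertices $u,v\in V\setminus S$ both dominate $S$. Then $N_G(u)\cap S=S=N_G(v)\cap S$, so $u$ and $v$ are two different vertices of $V\setminus S$ with the same trace on $S$. This contradicts the hypothesis that $S$ is an LD-set of $G$, which proves the uniqueness.

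Next, assuming such a vertex $u$ exists, I would show that $S':=S\cup\{u\}$ is an LD-set of $\overline{G}$. The first step is to observe that $S'$ is still an LD-set of $G$: it dominates $G$ because the subset $S\subseteq S'$ already does, and it locates because any two vertices $x,y\in V\setminus S'\subseteq V\setminus S$ already satisfy $N_G(x)\cap S\neq N_G(y)\cap S$, whence $N_G(x)\cap S'\neq N_G(y)\cap S'$ since $S\subseteq S'$. Now I can apply Proposition~\ref{pro.vertexdom} to the LD-set $S'$: it suffices to show that no vertex of $V\setminus S'$ dominates $S'$ in $G$. Indeed, if some $w\in V\setminus S'$ satisfied $S'\subseteq N_G(w)$, then in particular $S\subseteq N_G(w)$, so $w$ would be a vertex of $V\setminus S$ dominating $S$; by the uniqueness just proved, $w=u$, which is impossible since $u\in S'$ while $w\notin S'$. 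Hence no such $w$ exists, and Proposition~\ref{pro.vertexdom} yields that $S'$ is an LD-set of $\overline{G}$.

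The only delicate point --- and the step I would handle most carefully --- is the verification that $S'$ is again an LD-set of $G$, since this is exactly the hypothesis required before invoking Proposition~\ref{pro.vertexdom}. Everything else is a direct translation between the conditions ``$w$ dominates $S$'' and ``$w$ dominates $S'$'', together with the observation that the unique candidate $u$ has been absorbed into $S'$ and can therefore no longer play the role of an external dominating vertex. I do not expect a genuine obstacle beyond keeping careful track of which set, $S$ or $S'$, each domination condition refers to.
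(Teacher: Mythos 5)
Your proof is correct and takes essentially the same approach as the paper: uniqueness follows from the locating property of $S$, the set $S\cup\{u\}$ is checked to remain an LD-set of $G$, and the conclusion is reduced to a domination condition via one of the paper's two equivalent characterizations --- you invoke Proposition~\ref{pro.vertexdom} (no vertex outside $S\cup\{u\}$ dominates it in $G$), while the paper invokes Proposition~\ref{pro.domi} ($S\cup\{u\}$ is a dominating set of $\overline{G}$), the former being derived from the latter in a single line. If anything, your write-up is more careful than the paper's, which leaves implicit the verification that adjoining $u$ preserves the LD property in $G$.
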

\begin{proof}
By definition of LD-set of $G$, there is at most one vertex adjacent to all vertices of $S$. Moreover, $u$ is the only vertex not adjacent to any vertex of $S$ in $\overline{G}$. Therefore $S\cup \{ u \}$ is an LD-set of $G$ and a dominating set of $\overline{G}$. By Proposition \ref{pro.domi}, it is also an LD-set of $\overline{G}$.
\end{proof}

\begin{theorem} [\cite{ours3}]\label{cor.difuno}
For every graph $G$, $|\lambda (G) -\lambda (\overline{G})|\le 1$.
\end{theorem}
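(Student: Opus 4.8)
The plan is to prove the two one-sided inequalities $\lambda(\overline{G}) \le \lambda(G) + 1$ and $\lambda(G) \le \lambda(\overline{G}) + 1$ and then combine them. Since complementation is an involution, that is $\overline{\overline{G}} = G$, the second inequality will follow from the first simply by applying it with $\overline{G}$ in place of $G$. Thus the whole statement reduces to establishing the single bound $\lambda(\overline{G}) \le \lambda(G) + 1$.

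To obtain this bound I would start from an LD-code $S$ of $G$, so that $|S| = \lambda(G)$, and split into two cases according to Proposition \ref{pro.union}. If no vertex of $V \setminus S$ dominates $S$ in $G$, then Proposition \ref{pro.vertexdom} immediately gives that $S$ is itself an LD-set of $\overline{G}$, whence $\lambda(\overline{G}) \le |S| = \lambda(G)$. Otherwise, there exists a (unique, by Proposition \ref{pro.union}) vertex $u \in V \setminus S$ dominating $S$, and Proposition \ref{pro.union} guarantees that $S \cup \{u\}$ is an LD-set of $\overline{G}$; this yields $\lambda(\overline{G}) \le |S| + 1 = \lambda(G) + 1$. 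In both cases we conclude $\lambda(\overline{G}) \le \lambda(G) + 1$.

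Applying the same bound to the graph $\overline{G}$ gives $\lambda(G) = \lambda(\overline{\overline{G}}) \le \lambda(\overline{G}) + 1$, and combining the two inequalities produces $|\lambda(G) - \lambda(\overline{G})| \le 1$, as desired.

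I do not expect a genuine technical obstacle here, since the substantive work has already been carried out in Propositions \ref{pro.vertexdom} and \ref{pro.union}; the theorem is essentially a corollary of the latter. The only point that warrants a little care is the symmetric step: one must apply the structural facts proved for $G$ to $\overline{G}$ correctly, keeping in mind the involution $\overline{\overline{G}} = G$ so that the roles of the graph and its complement are interchanged without error.
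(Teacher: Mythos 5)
Your proof is correct and follows essentially the same route as the paper's: case-split on an LD-code of $G$ according to whether a dominating vertex exists, apply Propositions \ref{pro.vertexdom} and \ref{pro.union} respectively to get $\lambda(\overline{G}) \le \lambda(G)+1$, then invoke symmetry via $\overline{\overline{G}}=G$. No gaps; your treatment of the symmetric step is in fact slightly more explicit than the paper's one-line ``by symmetry.''
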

\begin{proof}
If $S$ has an LD-code of $G$ not containing a vertex dominating $S$, then $S$ is an LD-set of $\overline{G}$ by \ref{pro.vertexdom}. Consequently, $\lambda (\overline{G}) \le \lambda (G)$.
If $S$ is an LD-code of $G$ with a vertex $u\in V\setminus S$ dominating $S$, then $S\cup \{ u \}$ is an LD-set of $\overline{G}$ by \ref{pro.union}. 
Consequently, $\lambda (\overline{G}) \le \lambda (G)+1$.
In any case, $\lambda (\overline{G}) -\lambda (G)\le 1$.
By symmetry, $\lambda (G) - \lambda (\overline{G})\le 1$ and, hence, $|\lambda (G) -\lambda (\overline{G})|\le 1$.
\end{proof}

According to the preceding result, for every graph $G$,
$\lambda (\overline{G})\in\{\lambda (G)-1,\lambda (G),\lambda (G)+1\}$, all cases being  feasible for some connected graph $G$.
For example, it is easy to see that the complete graph $K_n$ of order $n\ge 2$ satisfy $\lambda (\overline{K_n})=\lambda (K_n)+1$, the star $K_{1,n-1}$ of order $n\ge 2$ satisfies
$\lambda (\overline{K_{1,n-1}})=\lambda (K_{1,n-1})$, and the bi-star $K_{2}(r,s)$, $r,s\ge 2$, obtained by joining the central vertices of two stars $K_{1,r}$ and $K_{1,s}$ , satisfies
$\lambda (K_{2}(r,s))=\lambda (\overline{K_{2}(r,s)})+1$.

We intend to obtain  either necessary or sufficient conditions for a graph $G$ to satisfy $\lambda (\overline{G})>\lambda (G)$, i.e., $\lambda (\overline{G})= \lambda (G) +1$.  
After noticing that this fact is closely related to the existence or not  of sets that are simultaneously locating-dominating sets in both $G$ and its complement  $\overline{G}$, the following definition is introduced.

\begin{defi}\rm
A set $S$ of vertices of a graph $G$  is a \emph{global LD-set} if $S$ is an LD-set of both $G$ and its complement $\overline{G}$. 
\end{defi}

Certainly, an LD-set is non-global if and only if there exists a (unique)  vertex $u\in V(G)\setminus S$ which dominates $S$, i.e., such that $S\subseteq N(u)$.

Accordingly, an LD-code $S$ of a graph $G$ is said to be  \emph{global} if it is a global LD-set, i.e. if $S$ is both an LD-code of $G$ and an LD-set of $\overline{G}$.
In terms of this new definition, a significant  result proved in \cite{ours3} can be presented as follows.

\begin{proposition} [\cite{ours3}]\label{pro.globalImplica}
If $G$ is a graph with a global LD-code, then $\lambda (\overline{G})\le \lambda (G)$.
\end{proposition}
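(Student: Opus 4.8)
The plan is to unpack the definitions directly; no auxiliary construction or case analysis is needed. Let $S$ be a global LD-code of $G$, which exists by hypothesis. First I would record that, because $S$ is an \emph{LD-code} of $G$, its cardinality equals the location-domination number of $G$, that is, $|S|=\lambda(G)$. Second, because $S$ is \emph{global}, it is by definition also an LD-set of $\overline{G}$.

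The key step is then simply to invoke the definition of $\lambda(\overline{G})$ as the minimum cardinality of an LD-set of $\overline{G}$: since $S$ is one such LD-set, we immediately obtain $\lambda(\overline{G})\le |S| = \lambda(G)$, which is exactly the claimed inequality.

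The main (and essentially only) subtlety to guard against is the asymmetry built into the notion of a global LD-code: it is required to be of minimum size for $G$ (an LD-\emph{code}), but merely an LD-\emph{set} --- not necessarily of minimum size --- for $\overline{G}$. This is precisely why the conclusion is the one-sided inequality $\lambda(\overline{G})\le\lambda(G)$ rather than an equality, since $S$ witnesses only an upper bound for $\lambda(\overline{G})$ and need not be optimal there. There is no genuine computational obstacle here; the earlier results (Propositions~\ref{pro.domi}--\ref{pro.union}) are not even invoked for this particular implication, although they are what explain \emph{when} such a global LD-code exists.
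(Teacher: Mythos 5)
Your proof is correct and is exactly the intended argument: a global LD-code $S$ has $|S|=\lambda(G)$ and is by definition an LD-set of $\overline{G}$, so it witnesses $\lambda(\overline{G})\le|S|=\lambda(G)$. The paper states this proposition without proof (citing an earlier work), but your definitional unpacking coincides with the reasoning the paper itself uses in the first half of its proof of Theorem~\ref{cor.difuno}.
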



\begin{proposition}\label{andreu}
If $G$ is a graph with a non-global LD-set $S$ and $u$ is the only vertex dominating $S$, then the following conditions are satisfied:
\begin{enumerate}[(i)]
  \item The eccentricity of $u$ is $ecc(u)\le 2$;
  \item the radius of $G$ is $rad (G)\le 2$;
  \item the diameter of $G$ is $diam (G)\le 4$;
  \item the maximum degree of $G$ is $\Delta (G)\ge \lambda (G)$.
\end{enumerate}
\end{proposition}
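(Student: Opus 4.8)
The plan is to extract everything from the two defining properties of the configuration: on one hand $u$ dominates $S$, which by definition means $S\subseteq N(u)$, i.e. $u$ is adjacent to \emph{every} vertex of $S$; on the other hand $S$ is a dominating set of $G$, so every vertex outside $S$ has at least one neighbor in $S$. These two facts jointly place $u$ at distance at most $2$ from all other vertices, and all four items will follow once this is made precise.

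First I would prove (i) directly. Let $w\neq u$ be arbitrary. If $w\in S$, then $uw\in E$ because $S\subseteq N(u)$, so $d(u,w)=1$. Otherwise $w\in V\setminus (S\cup\{u\})$, and since $S$ dominates $G$ there is some $s\in S$ with $ws\in E$; as $us\in E$ too, the path $u,s,w$ gives $d(u,w)\le 2$. Hence $ecc(u)\le 2$. I would remark here that this already forces $G$ to be connected, so the radius and diameter in the remaining items are well defined.

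Items (ii) and (iii) are then immediate consequences of (i) via the standard eccentricity inequalities. Since $rad(G)$ is the minimum eccentricity over all vertices, $rad(G)\le ecc(u)\le 2$, which is (ii). For (iii), applying the triangle inequality through $u$, for any pair $x,y$ one has $d(x,y)\le d(x,u)+d(u,y)\le 2+2=4$ by (i), so $diam(G)\le 4$; equivalently one may invoke $diam(G)\le 2\,rad(G)$ together with (ii). Finally, (iv) follows from a short counting argument that does not use the distance bounds at all: because $u$ is adjacent to all $|S|$ vertices of $S$ and $u\notin S$, we have $\deg(u)\ge |S|$; and because $S$ is an LD-set, its cardinality is at least the minimum possible, $|S|\ge\lambda(G)$. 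Combining, $\Delta(G)\ge\deg(u)\ge |S|\ge\lambda(G)$.

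I do not expect a genuine obstacle: each item reduces to one short observation once the adjacency of $u$ to $S$ and the domination property of $S$ are in hand. The only point needing a word of care is ruling out the degenerate case $S=\emptyset$, which the hypothesis already excludes, since an empty set would be dominated by every vertex and so could not have a \emph{unique} dominating vertex $u$. It is worth emphasizing that (iv) is the substantive conclusion, since $\Delta(G)\ge\lambda(G)$ fails for general graphs (e.g. long paths); it holds here precisely because the existence of a vertex dominating an LD-set is a strong structural restriction.
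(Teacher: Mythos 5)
Your proof is correct and follows essentially the same route as the paper's: distance at most $2$ from $u$ via $S\subseteq N(u)$ plus domination by $S$, then the standard eccentricity inequalities for (ii)--(iii), and the degree count $\deg(u)\ge |S|$ for (iv). One small point in your favor: you correctly use $|S|\ge\lambda(G)$ (valid for any LD-set), whereas the paper writes $|S|=\lambda(G)$, which strictly holds only when $S$ is an LD-code.
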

\begin{proof}
If $x\in N(u)$, then $d(u,x)=1$.  If $x\notin N(u)$, since $S$ is a dominating set of $G$, then there exists a vertex $y\in S\cap N(x) \subseteq N(u)$.  Hence, $ecc(u)\le 2$. Consequently,  $rad (G) \le 2$ and $diam (G)\le 4$.  By the other hand,
$deg_G(u)=|N_G(u)|\ge |S|=\lambda (G)$, implying that $\Delta (G)\ge \lambda (G)$.
\end{proof}

\begin{corollary}
If $G$ is a graph satisfying $\lambda (\overline{G}) =\lambda (G)  + 1$, then $G$ is a connected graph such that $rad (G)\le 2$, $diam (G)\le 4$ and $\Delta (G)\ge \lambda (G)$.
\end{corollary}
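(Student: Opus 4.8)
The plan is to reduce the hypothesis $\lambda(\overline{G}) = \lambda(G)+1$ to the statement that $G$ admits no global LD-code, and then to invoke Proposition~\ref{andreu} applied to an arbitrary LD-code of $G$. First I would note that $\lambda(\overline{G}) = \lambda(G)+1$ forces $\lambda(\overline{G}) > \lambda(G)$; by the contrapositive of Proposition~\ref{pro.globalImplica}, this means that $G$ cannot possess a global LD-code.

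Next, since every finite graph has at least one LD-code (for instance, $V$ itself is always an LD-set, vacuously), I would fix any LD-code $S$ of $G$, so that $|S|=\lambda(G)$. Because $G$ has no global LD-code, this particular $S$ is non-global. By the characterization recorded immediately after the definition of global LD-set, a non-global LD-set has a unique vertex $u\in V\setminus S$ dominating $S$, i.e. with $S\subseteq N(u)$.

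Now $S$ is a non-global LD-set and $u$ is the unique vertex dominating it, so the hypotheses of Proposition~\ref{andreu} are met verbatim. This immediately yields $ecc(u)\le 2$, $rad(G)\le 2$, $diam(G)\le 4$, and $\Delta(G)\ge |S|=\lambda(G)$, which are exactly three of the four claimed conclusions. The only remaining point is connectedness, which I would extract from the bound $ecc(u)\le 2$: since every vertex of $G$ lies within distance $2$ of $u$, all vertices are reachable from $u$, and hence $G$ is connected.

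The argument is essentially a direct assembly of the preceding results, so there is no deep obstacle. The only subtlety worth double-checking is the logical direction of Proposition~\ref{pro.globalImplica}: it guarantees only the non-existence of \emph{some} global LD-code, not that a given code is the troublesome one, which is why I apply the non-global characterization to the \emph{chosen} LD-code $S$ rather than trying to locate a specific global code. Deriving connectedness from finite eccentricity is routine, but should be stated explicitly, since it is not literally part of the statement of Proposition~\ref{andreu}.
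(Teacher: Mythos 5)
Your proposal is correct and follows exactly the route the paper intends: the hypothesis $\lambda(\overline{G})=\lambda(G)+1$ rules out global LD-codes via the contrapositive of Proposition~\ref{pro.globalImplica}, so any LD-code is non-global and Proposition~\ref{andreu} applies to it, with connectedness read off from $ecc(u)\le 2$. Your explicit remarks on the existence of an LD-code and on deducing connectedness are sound refinements of details the paper leaves implicit.
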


\begin{figure}[hbt]
  \begin{center}
        \includegraphics[width=.18\textwidth]{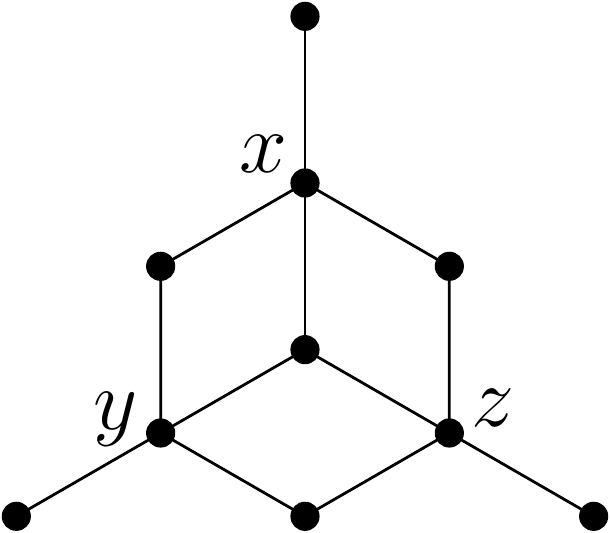}
  \end{center}
  \caption{This graph satisfies: $rad(G)=2$, $diam(G)=4$, $\lambda(G)=3$, $\lambda(\overline G)=4$ and $\{x,y,z,\}$ is a non-global LD-code.}
  \label{hexagons}
\end{figure}

The above result is tight in the sense that there are graphs of diameter 4 and radius 2 (respt. $\Delta (G)= \lambda (G)$), verifying $\lambda (\overline{G}) =\lambda (G)  + 1$. 
The graph displayed in  Figure \ref{hexagons} is an example of graph satisfying $rad(G)=2$, $diam(G)=4$ and $\lambda (\overline{G}) =\lambda (G)  + 1$, and 
 the complete graph $K_n$ is an example of a graph such that $\Delta (G)= \lambda (G)$ and $\lambda (\overline{G}) =\lambda (G)  + 1$, since   $\lambda (\overline{K_n}) =n$,  $\lambda (K_n)=\Delta (K_n)=n-1$.

\section{The global location-domination number}

\begin{defi}\rm

The \emph{global location-domination number} of a graph $G$, denoted by $\lambda_g (G)$, is defined as the minimum cardinality of a global LD-set of $G$. 
\end{defi}

Notice that, for every graph $G$, $\lambda_g (\overline{G})=\lambda_g (G)$, 
since for  every set of vertices $S\subset V(G)=V(\overline{G})$, $S$ is a global LD-set of  $G$ if and only if it  is a global LD-set of   $\overline{G}$.

\begin{proposition}\label{pro.ldglobal}
For any graph $G=(V,E)$,  $ \lambda (G) \le \lambda_g (G)  \le  \lambda (G)+1.$
\end{proposition}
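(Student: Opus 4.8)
The plan is to establish the two inequalities separately, the first being immediate and the second relying on Proposition \ref{pro.union}.

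For the lower bound $\lambda(G) \le \lambda_g(G)$, I would simply observe that every global LD-set of $G$ is, by definition, an LD-set of $G$. Hence a global LD-set of minimum cardinality has at least $\lambda(G)$ vertices, which gives $\lambda(G) \le \lambda_g(G)$ at once.

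For the upper bound $\lambda_g(G) \le \lambda(G)+1$, I would start from an LD-code $S$ of $G$, so that $|S| = \lambda(G)$, and split into two cases according to whether $S$ is global. If $S$ is global, then $S$ is itself a global LD-set and $\lambda_g(G) \le |S| = \lambda(G) \le \lambda(G)+1$. If $S$ is non-global, then by the characterization recorded just after the definition of global LD-set there is a (unique) vertex $u \in V \setminus S$ dominating $S$; Proposition \ref{pro.union} then guarantees that $S \cup \{u\}$ is an LD-set of $\overline{G}$. It remains to note that $S \cup \{u\}$ is also an LD-set of $G$: enlarging an LD-set preserves both domination and the locating property, since any two distinct vertices of $V \setminus (S \cup \{u\})$ already lie in $V \setminus S$ and are therefore distinguished by $S \subseteq S \cup \{u\}$. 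Thus $S \cup \{u\}$ is a global LD-set of cardinality $\lambda(G)+1$, yielding $\lambda_g(G) \le \lambda(G)+1$.

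There is essentially no obstacle here, since both bounds are assembled directly from the definitions and from Proposition \ref{pro.union}. The only point deserving a word of justification is the monotonicity remark -- that adding a vertex to an LD-set of $G$ keeps it an LD-set of $G$ -- which ensures $S \cup \{u\}$ is simultaneously an LD-set of $G$ and of $\overline{G}$, hence genuinely global. This is exactly the observation already made inside the proof of Proposition \ref{pro.union}, so the argument amounts to little more than a bookkeeping of the two cases.
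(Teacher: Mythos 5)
Your proposal is correct and follows essentially the same route as the paper: the lower bound from the definition, and the upper bound by taking an LD-code $S$, splitting on whether it is global, and otherwise adjoining the unique dominating vertex $u$ so that $S\cup\{u\}$ is an LD-set of both $G$ and $\overline{G}$ via Proposition \ref{pro.union}. Your explicit verification that $S\cup\{u\}$ remains an LD-set of $G$ is a point the paper leaves implicit (it is contained in the proof of Proposition \ref{pro.union}), but the argument is the same.
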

\begin{proof}
The first inequality is a consequence of the fact that a global LD-set of $G$ is also an LD-set of $G$.
For the second inequality, suppose that $S$ is an LD-code of $G$, i.e. $|S|=\lambda (G)$. If $S$ is a global LD-set of $G$, then $\lambda_g(G)=\lambda(G)$. 
Otherwise, there exists a vertex $u\in V\setminus S$ dominating $S$ and $S\cup \{ u \}$
is an LD-set of $\overline{G}$. Therefore, $\lambda_g(G)\le \lambda (G) +1$. 
\end{proof}

\begin{corollary}
For any graph $G=(V,E)$,  $ \max \{ \lambda (G), \lambda (\overline{G}) \} \le \lambda_g (G)  \le \min \{ \lambda ({G})+1, \lambda (\overline{G})+1\}.$
\end{corollary}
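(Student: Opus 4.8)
The plan is to derive both the lower and upper bounds directly from Proposition \ref{pro.ldglobal} by applying it to both $G$ and its complement $\overline{G}$, exploiting the symmetry observed just before that proposition, namely that $\lambda_g(\overline{G}) = \lambda_g(G)$.

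First I would record what Proposition \ref{pro.ldglobal} gives when applied to $G$ itself, namely $\lambda(G) \le \lambda_g(G) \le \lambda(G) + 1$. This immediately contributes $\lambda(G) \le \lambda_g(G)$ to the left-hand bound and $\lambda_g(G) \le \lambda(G) + 1$ to the right-hand bound. Next I would apply the same proposition to $\overline{G}$. Since $\overline{\overline{G}} = G$, this yields $\lambda(\overline{G}) \le \lambda_g(\overline{G}) \le \lambda(\overline{G}) + 1$, and using the symmetry $\lambda_g(\overline{G}) = \lambda_g(G)$ it rewrites as $\lambda(\overline{G}) \le \lambda_g(G) \le \lambda(\overline{G}) + 1$. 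This contributes $\lambda(\overline{G}) \le \lambda_g(G)$ on the left and $\lambda_g(G) \le \lambda(\overline{G}) + 1$ on the right.

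Finally I would combine the two lower estimates into $\max\{\lambda(G), \lambda(\overline{G})\} \le \lambda_g(G)$ and the two upper estimates into $\lambda_g(G) \le \min\{\lambda(G)+1, \lambda(\overline{G})+1\}$, which is exactly the claimed double inequality. There is essentially no genuine obstacle here; the only point requiring care is the invocation of the symmetry $\lambda_g(\overline{G}) = \lambda_g(G)$, since that is precisely what allows one to feed the complement $\overline{G}$ into Proposition \ref{pro.ldglobal} and still obtain a statement about $\lambda_g(G)$ rather than about $\lambda_g(\overline{G})$. Everything else is a routine juxtaposition of the four inequalities produced by the two applications of the preceding proposition.
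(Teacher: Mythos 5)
Your proof is correct and follows exactly the route the paper intends: the corollary is stated without proof as an immediate consequence of Proposition~\ref{pro.ldglobal} applied to both $G$ and $\overline{G}$, combined with the symmetry $\lambda_g(\overline{G})=\lambda_g(G)$ noted just before that proposition. Your explicit write-up of the four inequalities and their combination is a faithful expansion of that implicit argument.
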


\begin{corollary}\label{cor.lambdaglobalmayor}
Let $G=(V,E)$ be a graph.
\begin{itemize}

\item If $\lambda (G)\not= \lambda (\overline{G})$, then $\lambda_g(G)=\max \{ \lambda (G), \lambda (\overline{G})\}$.

\item If $\lambda (G)=\lambda (\overline{G})$, then $\lambda_g(G)\in \{ \lambda (G), \lambda (G) +1 \}$, and both possibilities are feasible.

\end{itemize}

\end{corollary}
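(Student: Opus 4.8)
The plan is to split the argument along the two cases in the statement, dispatch the first by squeezing the bounds already in hand, and reserve the genuine work for the feasibility claims in the second.

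For the first bullet I would begin from Theorem~\ref{cor.difuno}, which forces $|\lambda(G)-\lambda(\overline{G})|=1$ as soon as the two numbers differ. Exploiting the invariance $\lambda_g(\overline{G})=\lambda_g(G)$ recorded just after the definition of $\lambda_g$, I may assume without loss of generality that $\lambda(\overline{G})=\lambda(G)+1$, so that $\max\{\lambda(G),\lambda(\overline{G})\}=\lambda(G)+1$. Feeding this into the preceding corollary, the lower bound $\max\{\lambda(G),\lambda(\overline{G})\}$ equals $\lambda(G)+1$, while the upper bound $\min\{\lambda(G)+1,\lambda(\overline{G})+1\}=\min\{\lambda(G)+1,\lambda(G)+2\}$ also equals $\lambda(G)+1$; the two bounds collide and pin down $\lambda_g(G)=\lambda(G)+1=\max\{\lambda(G),\lambda(\overline{G})\}$. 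The symmetric subcase $\lambda(G)=\lambda(\overline{G})+1$ is handled identically. This part is essentially routine bookkeeping.

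For the second bullet, the containment $\lambda_g(G)\in\{\lambda(G),\lambda(G)+1\}$ is immediate from Proposition~\ref{pro.ldglobal} once $\lambda(G)=\lambda(\overline{G})$. The substance is to exhibit two graphs realizing the two values while keeping $\lambda(G)=\lambda(\overline{G})$. For the value $\lambda(G)$ I would take a self-complementary graph such as $P_4$: one verifies $\lambda(P_4)=2=\lambda(\overline{P_4})$ and that the pair of endpoints is an LD-code admitting no dominating vertex, hence a \emph{global} LD-code, so that $\lambda_g(P_4)=\lambda(P_4)=2$.

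The only delicate point is producing a graph with $\lambda(G)=\lambda(\overline{G})$ whose global location-domination number strictly exceeds $\lambda(G)$, which forces \emph{every} LD-code of $G$ to carry a (unique) dominating vertex. My proposal is $G=P_5$, with the plan: (i) compute $\lambda(P_5)=2$, realized by the dominating pair $\{2,4\}$, and check $\lambda(\overline{P_5})=2$ as well, placing us in the equality case; (ii) run through the $\binom{5}{2}$ candidate pairs and show that $\{2,4\}$ is the \emph{unique} LD-code; (iii) observe that the central vertex dominates $\{2,4\}$, so by Proposition~\ref{pro.vertexdom} this sole LD-code is non-global, whence $\lambda_g(P_5)=\lambda(P_5)+1=3$ via Proposition~\ref{pro.ldglobal}. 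The step demanding care is the exhaustive verification in (ii) that no other pair simultaneously dominates and locates $P_5$; this is where I expect the main (though still elementary) obstacle to lie, since the whole feasibility hinges on the uniqueness of the minimum LD-set, while everything else reduces to finite checks.
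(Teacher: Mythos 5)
Your proposal is correct and follows essentially the same route as the paper: both arguments squeeze $\lambda_g(G)$ between the bounds coming from Proposition~\ref{pro.ldglobal} (together with $\lambda_g(G)=\lambda_g(\overline{G})$) and then settle feasibility by finite checks on small graphs. The only difference is the choice of witnesses --- you use $P_4$ and $P_5$ where the paper uses $P_3$ and $C_5$ --- and your checks are valid (indeed $\{2,4\}$ is the unique LD-code of $P_5$ and is dominated by the central vertex, matching the values $\lambda(P_5)=\lambda(\overline{P_5})=2$, $\lambda_g(P_5)=3$ in Table~\ref{tab.valorspetits}).
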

\begin{proof}
Both statements are consequence of Proposition \ref{pro.ldglobal}.
Next, we give some examples to illustrate all possibilities given. It is easy to check that the complete graph $K_2$ satisfies
$1=\lambda (K_2)\not= \lambda (\overline{K_2})=2$ and $\lambda_g ({K_2})=\lambda (\overline{K_2})$; the path of order $P_3$,  satisfies $\lambda (P_3)=\lambda (\overline{P_3})=\lambda_g (P_3)=2$ and the cycle $C_5$, satisfies $\lambda (C_5)=\lambda (\overline{C_5})=2$ and $\lambda_g (C_5)=3$.
\end{proof}

\begin{proposition}\label{pro.lambdaglobal}
 For any graph $G=(V,E)$,  $ \lambda_g (G) = \lambda ({G})+1$ if and only if every LD-code of $G$ is non-global.
\end{proposition}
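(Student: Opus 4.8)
The plan is to reduce everything to Proposition~\ref{pro.ldglobal}, which already confines $\lambda_g(G)$ to one of the two values $\lambda(G)$ or $\lambda(G)+1$. Since the claim is an equivalence between $\lambda_g(G)=\lambda(G)+1$ and the non-existence of a global LD-code, I would first treat the complementary statement, namely that $\lambda_g(G)=\lambda(G)$ holds if and only if $G$ admits a global LD-code. The pivotal bridging observation, which I would make explicit at the outset, is that a \emph{global LD-set of minimum cardinality} is exactly the same object as a \emph{global LD-code}; indeed, an LD-code is by definition a minimum-cardinality LD-set, so minimality among LD-sets is automatic as soon as the size equals $\lambda(G)$.

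For the forward part of this auxiliary equivalence, if $\lambda_g(G)=\lambda(G)$ then by definition of $\lambda_g$ there is a global LD-set $S$ with $|S|=\lambda_g(G)=\lambda(G)$; being an LD-set of $G$ of minimum possible cardinality, $S$ is an LD-code, and it is global, so $G$ has a global LD-code. Conversely, if $S$ is a global LD-code then $S$ is a global LD-set with $|S|=\lambda(G)$, whence $\lambda_g(G)\le\lambda(G)$; combining this with the lower bound $\lambda(G)\le\lambda_g(G)$ from Proposition~\ref{pro.ldglobal} yields $\lambda_g(G)=\lambda(G)$.

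Having established this dichotomy, the proposition follows by passing to contrapositives. If every LD-code of $G$ is non-global, then $G$ has no global LD-code, so by the auxiliary equivalence $\lambda_g(G)\neq\lambda(G)$; since Proposition~\ref{pro.ldglobal} leaves only the value $\lambda(G)+1$, we conclude $\lambda_g(G)=\lambda(G)+1$. For the reverse implication, if $\lambda_g(G)=\lambda(G)+1$ then in particular $\lambda_g(G)\neq\lambda(G)$, so $G$ has no global LD-code, that is, every LD-code of $G$ is non-global.

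I do not anticipate a genuine obstacle: the argument is essentially bookkeeping layered on top of Proposition~\ref{pro.ldglobal}. The one place requiring care is precisely the identification of ``global LD-set attaining size $\lambda(G)$'' with ``global LD-code'', and keeping the distinct roles of LD-set (any locating-dominating set) and LD-code (a minimum one) straight throughout; conflating these is the only way the reasoning could slip.
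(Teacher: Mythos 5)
Your proof is correct and follows essentially the same route as the paper's: both arguments hinge on identifying a global LD-set of cardinality $\lambda(G)$ with a global LD-code, and then using the bounds $\lambda(G)\le\lambda_g(G)\le\lambda(G)+1$ of Proposition~\ref{pro.ldglobal} to force the dichotomy. Your version merely makes explicit (via the auxiliary equivalence and the contrapositive bookkeeping) what the paper's shorter proof leaves implicit.
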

\begin{proof}
  A global LD-code of $G$ is an LD-set of both $G$ and $\overline{G}$. Hence, if $G$ contains at least a global LD-code, then $ \lambda_g (G) = \lambda ({G})$.
  Conversely, if every LD-code of $G$ is non-global, then there is no global LD-set of $G$ of size $\lambda (G)$. Then, $ \lambda_g (G) = \lambda ({G})+1$.
\end{proof}

In Tables \ref{tab.valorspetits} and  \ref{tab.valors},  the location-domination number of some families of graphs is displayed, along with the location-domination number of its complement graphs and the global location-domination number. 
Concretely, we consider the path $P_n$ of order  $n\ge1$; the cycle $C_n$ of order  $n\ge4$;  the wheel  $W_n$  of order  $n\ge5$, obtained by joining a new vertex to all vertices of a cycle of order $n-1$; the complete graph  $K_n$ of order  $n\ge3$; the complete bipartite graph $K_{r,s}$ of order  $n=r+s\ge4$, with $2\le r\le s$ and stable sets of order $r$ and $s$, respectively; the star $K_{1,n-1}$ of order  $n\ge4$, obtained by joining a new vertex to $n-1$ isolated vertices; and finally, the bi-star $K_2(r,s)$ of order $n=r+s+2\ge6$ with  $2\le r\le s$,  obtained by joining the central vertices of two stars $K_{1,r}$ and $K_{1,s}$ respectively.

As a consequence of Propositions \ref{andreu} and \ref{pro.lambdaglobal}, the following corollary holds.

\begin{corollary}
If $G$ is a graph with $diam(G)\ge 5$, then $\lambda_g(G)=\lambda(G)$.
\end{corollary}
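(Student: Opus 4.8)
The plan is to argue by contradiction, leaning on the sandwich inequality $\lambda (G)\le \lambda_g (G)\le \lambda (G)+1$ of Proposition~\ref{pro.ldglobal}. Since this bound already pins $\lambda_g(G)$ to one of only two values, establishing $\lambda_g(G)=\lambda(G)$ is equivalent to ruling out the alternative $\lambda_g(G)=\lambda(G)+1$. So I would begin by assuming, for contradiction, that $\lambda_g(G)=\lambda(G)+1$.

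Next I would invoke Proposition~\ref{pro.lambdaglobal}, which asserts that $\lambda_g(G)=\lambda(G)+1$ holds precisely when every LD-code of $G$ is non-global. Fixing any LD-code $S$ of $G$ (one exists, as $G$ is finite), the standing assumption therefore forces $S$ to be non-global. By the characterization recorded just after the definition of a global LD-set, a non-global LD-set admits a unique vertex $u\in V\setminus S$ dominating it. This is exactly the hypothesis required to apply Proposition~\ref{andreu}, whose conclusion~(iii) then yields $diam(G)\le 4$, contradicting the hypothesis $diam(G)\ge 5$. Hence the assumption is untenable and $\lambda_g(G)=\lambda(G)$, as claimed.

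The argument is short, and the only point deserving care is checking that Proposition~\ref{andreu} genuinely applies: one must confirm that a non-global LD-code produces a \emph{single} vertex $u$ dominating $S$, since that proposition is stated for a non-global LD-set together with its unique dominating vertex. This is immediate from the defining property of an LD-set—at most one vertex can be adjacent to all of $S$—so no verification beyond citing the earlier characterization is needed. I therefore expect no real obstacle; the substance of the result is carried entirely by the diameter bound in Proposition~\ref{andreu}(iii), and the contribution here is simply to combine it with the dichotomy of Proposition~\ref{pro.lambdaglobal}.
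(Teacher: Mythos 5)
Your proof is correct and follows exactly the paper's intended route: the paper derives this corollary directly from Propositions~\ref{andreu} and \ref{pro.lambdaglobal}, which is precisely the combination you make explicit (with Proposition~\ref{pro.ldglobal} supplying the dichotomy). Your write-up merely fills in the contradiction argument the paper leaves implicit, so there is nothing to add or correct.
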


We finalize this section by calculating  $\lambda(G)$,  $\lambda (\overline{G})$ and  $\lambda_g(G)$  for some basic graph families.

\begin{lemma}\label{cnpnpn-1}
If $n\ge 7$, then $\lambda (\overline{C_n})=\lambda (\overline{P_n})=\lambda (P_{n-1}).$
\end{lemma}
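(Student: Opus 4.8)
The plan is to collapse all three quantities to a single combinatorial minimum on $P_n$ and $C_n$, and then transfer that minimum to an LD-code of $P_{n-1}$ by deleting one vertex. First I would apply Lemma~\ref{lem.vecinosdistintos} together with Proposition~\ref{pro.domi}: a set $S$ is an LD-set of $\overline{G}$ exactly when the traces $N_G(v)\cap S$ (for $v\in V\setminus S$) are pairwise distinct \emph{and} $S$ dominates $\overline{G}$. For $G=P_n$ or $G=C_n$ with $n\ge 7$, observe that, since every trace is a neighbourhood in a graph of maximum degree $2$, a set of size at most $2$ separates at most $4$ vertices outside it, hence covers at most $6<n$ vertices in all; so any set with pairwise distinct traces has at least $3$ elements. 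Because $\Delta(P_n),\Delta(C_n)\le 2$, no vertex of $V\setminus S$ can contain $S$ in its neighbourhood, so by Proposition~\ref{pro.vertexdom} domination of the complement is automatic. Thus $\lambda(\overline{P_n})$ (resp. $\lambda(\overline{C_n})$) equals the least size of a set whose traces in $P_n$ (resp. $C_n$) are pairwise distinct, i.e. such that at most one vertex outside $S$ has empty trace.

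Next I would prove $\lambda(\overline{C_n})=\lambda(P_{n-1})$ by a deletion argument exploiting cyclic symmetry. For the lower bound, take a minimum set $S$ with pairwise distinct traces in $C_n$. If no trace is empty then $S$ is an LD-code of $C_n$ and $|S|\ge\lambda(C_n)\ge\lambda(P_{n-1})$ by monotonicity of the values recorded in Table~\ref{tab.valors}; otherwise a unique $w\notin S$ has empty trace, its two cycle-neighbours lie outside $S$, and deleting $w$ turns $C_n$ into $P_{n-1}$ without altering any surviving trace, so $S$ becomes an LD-code of $P_{n-1}$ and $|S|\ge\lambda(P_{n-1})$. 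For the upper bound I would start from an LD-code of $P_{n-1}$ both of whose endpoints lie outside the code (available from the explicit periodic optimal code once $n-1\ge 6$) and close the path into $C_n$ with a new vertex joined to both endpoints; that vertex receives the empty trace while all others are preserved, giving $\lambda(\overline{C_n})\le\lambda(P_{n-1})$.

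The identity $\lambda(\overline{P_n})=\lambda(P_{n-1})$ follows the same template. The upper bound lifts an LD-code of $P_{n-1}$ with $v_{n-1}$ outside the code by appending a pendant $v_n$, which then carries the unique empty trace. For the lower bound, a minimum set $S$ with distinct traces in $P_n$ is an LD-code of $P_n$ (so $|S|\ge\lambda(P_n)\ge\lambda(P_{n-1})$) unless it has a unique empty-trace vertex $v_j$; deleting $v_j$ splits $P_n$ into $P_{j-1}$ and $P_{n-j}$, on which $S$ restricts to LD-codes, whence $|S|\ge\lambda(P_{j-1})+\lambda(P_{n-j})\ge\lambda(P_{n-1})$, the last inequality being the superadditivity $\lceil 2a/5\rceil+\lceil 2b/5\rceil\ge\lceil 2(a+b)/5\rceil$ applied to the path values. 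Combining the two identities yields the lemma.

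The two genuinely delicate points are, first, guaranteeing an optimal LD-code of $P_{n-1}$ with the prescribed endpoint(s) free, which I would settle by writing down the period-five optimal code explicitly and checking that for $n-1\ge 6$ it can be shifted so as to avoid one or both endpoints; and second, the internal-deletion case of the path, where I must invoke the exact value $\lambda(P_m)=\lceil 2m/5\rceil$ and superadditivity of the ceiling rather than a purely structural comparison. I expect the cycle case to be cleaner than the path case precisely because cyclic symmetry lets \emph{any} vertex serve as the empty-trace vertex, so no exchange argument is required there; the remaining effort is confined to the endpoint bookkeeping for $P_n$.
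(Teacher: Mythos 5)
Your proof is correct and follows essentially the same route as the paper's: the upper bound by extending an optimal LD-code of $P_{n-1}$ that avoids the endpoint(s) (your primal ``empty trace'' constructions for $C_n$ and $P_n$ are exactly the paper's construction of the graph $G^*$ obtained by adding a vertex to $\overline{P_{n-1}}$, viewed in the complement), and the lower bound by deleting the unique empty-trace vertex (the paper's vertex dominating $S$ in $\overline{G}$) and invoking $\lceil 2r/5\rceil+\lceil 2s/5\rceil\ge\lceil 2(r+s)/5\rceil$. The remaining differences are presentational: you phrase everything via traces in $G$ rather than neighborhoods in $\overline{G}$, and you split off a case (no empty-trace vertex) that the paper absorbs by deleting an arbitrary vertex not in $S$; like the paper, you defer to the known periodic optimal codes for the existence of an endpoint-avoiding LD-code of $P_{n-1}$.
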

\begin{proof}
Firsty, we prove  that $\lambda (\overline{C_n})\le \lambda (P_{n-1})$ and $\lambda (\overline{P_n})\le \lambda (P_{n-1})$.
Suppose that  $V(P_{n-1})=\{ 1,2,\dots ,n-1\}$ and $E(P_{n-1})=\{ (i,i+1) : i=1,2,...,n-2\}$ are the vertex set and the edge set of $P_{n-1}$, respectively.
Assume that $S$ is an LD-code of $P_{n-1}$ such that $S$ does not contain vertex $1$ neither $n-1$ (it is easy to construct such an LD-code from those given in \cite{bchl}).  
Since $n-1\ge 6$, $S$ has at least $3$ vertices and there is no vertex in $V(P_{n-1})\setminus S$ dominating $S$ in $P_{n-1}$. 
Hence, $S$ is an LD-set of $\overline{P_{n-1}}$. 

Next,  consider the graph $G^*$ obtained by adding to the graph $\overline{P_{n-1}}$ a new vertex $u$ adjacent to the vertices $2,3,\dots ,n-2$, and may be to $1$ or $n-1$. 
Clearly, by construction,  $u$ is adjacent to all vertices of $S$ in $G^*$ and there is no vertex in $\overline{P_{n-1}}$ adjacent to all vertices in $S$.
Therefore, $S$ is an LD-set of $G^*$ and $\lambda (G^*)\le \lambda (P_{n-1})$.
Finally,  observe that if $u$ is not adjacent to $1$, neither to $n-1$, then $G^*$ is the graph $\overline{C_n}$ and if $u$ is  adjacent to exactly one of the vertices $1$ or $n-1$, then $G^*$ is the graph $\overline{P_n}$, which proves the inequalities before stated.

Lastly,  we  prove that $\lambda (P_{n-1}) \le \lambda (\overline{G})$, when $G\in \{  P_n , C_n \}$. 
Consider an LD-code $S$ of $\overline{G}$. 
Let $x$ be the only vertex dominating $S$ in $\overline{G}$, if it exists, or any vertex not in $S$, otherwise. 
By construction, $S$ is an LD-set of $G-x$, hence $\lambda (G-x)\le \lambda (\overline{G})$. To end the proof,  we distinguish two cases.

\begin{itemize}

\item[-] If $G$ is the cycle $C_n$, then $G-x$ is the path $P_{n-1}$, implying that $\lambda (P_{n-1})\le \lambda (\overline{C_n})$.

\item[-] If $G$ if the path $P_n$, then $G-x$ is either the path $P_{n-1}$ or the graph $P_r + P_s$, with $r,s\ge 1$ and $r+s=n-1\ge6$. Since, 
$ \lambda (P_r + P_s)= \lambda (P_r) + \lambda (P_s)=\lceil 2r/5 \rceil + \lceil 2s/5 \rceil \ge \lceil 2(r+s)/5 \rceil = \lambda (P_{n-1})$,
we conclude that, in any case,  $\lambda (P_{n-1})\le \lambda (\overline{P_n})$.
\end{itemize}
\vspace{-1cm}\end{proof}

\begin{proposition}\label{donosti}
 Let   $G$ be a graph of order $n\ge1$.
 If $G$ belongs to the set $\{P_n,C_n,W_n,K_n,K_{1,n-1},K_{r,n-r},K_2(r,n-r)\}$, 
then the values of $\lambda (G)$ and $\lambda (\overline{G})$ are known and they are displayed in Tables \ref{tab.valorspetits} and  \ref{tab.valors}.
\end{proposition}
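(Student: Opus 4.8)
The plan is to verify the table entries one family at a time, splitting each case into the computation of $\lambda(G)$ and of $\lambda(\overline{G})$ and leaning on the structural results already in hand — in particular Lemma \ref{cnpnpn-1} for the complements of paths and cycles, and the additivity of $\lambda$ over connected components (noted in the Introduction), since every complement arising here turns out to be a disjoint union of complete graphs or of a complement of a cycle together with isolated vertices.

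First I would dispose of the \emph{dense} families, where $\lambda(G)$ follows from a twin argument and $\lambda(\overline{G})$ from a decomposition of the complement. In $K_n$ all vertices share the same closed neighborhood, so an LD-set may omit at most one vertex, giving $\lambda(K_n)=n-1$, while $\overline{K_n}$ is edgeless and forces $\lambda(\overline{K_n})=n$. In $K_{1,n-1}$ and $K_{r,n-r}$ the leaves (respectively the vertices of each part) form classes of mutually non-distinguishable vertices, so at most one vertex per class may lie outside the LD-set; checking that the resulting candidate set is indeed locating-dominating yields $\lambda(K_{1,n-1})=n-1$ and $\lambda(K_{r,n-r})=n-2$. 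Their complements are $K_1+K_{n-1}$ and $K_r+K_{n-r}$, so additivity over components gives $\lambda(\overline{K_{1,n-1}})=n-1$ and $\lambda(\overline{K_{r,n-r}})=n-2$. The bi-star $K_2(r,n-r)$ follows the same philosophy but needs a slightly finer case analysis: at most one leaf per star may be left out, and leaving a leaf out forces its center into the set, so at most two vertices can lie outside any LD-set, giving $\lambda(K_2(r,n-r))=n-2$; describing $\overline{K_2(r,n-r)}$ explicitly then yields $\lambda(\overline{K_2(r,n-r)})=n-3$, consistent with the example recorded in Section 2.

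For paths and cycles, $\lambda(P_n)$ and $\lambda(C_n)$ are the known values $\lceil 2n/5\rceil$ (with the small exceptions recorded separately) taken from \cite{bchl}, and for $n\ge 7$ the equalities $\lambda(\overline{P_n})=\lambda(\overline{C_n})=\lambda(P_{n-1})$ are precisely the content of Lemma \ref{cnpnpn-1}. The wheel reduces to a cycle on its complement side, since $\overline{W_n}=K_1+\overline{C_{n-1}}$; additivity together with Lemma \ref{cnpnpn-1} then gives $\lambda(\overline{W_n})=1+\lambda(P_{n-2})$ for $n\ge 8$.

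The step I expect to be the main obstacle is the direct determination of $\lambda(W_n)$ itself, which does not reduce to an already-solved family and requires an explicit locating-dominating construction on the rim together with a matching lower bound; the hub dominates every vertex, so it must be treated separately from the rim, and the cyclic structure of the rim has to be exploited much as in the path/cycle case. The only other delicate point is the bookkeeping for small orders, where the asymptotic formulas break down: the finitely many cases $n<7$ (and the small wheels) are to be settled by direct inspection and collected in Table \ref{tab.valorspetits}, using Proposition \ref{pro.vertexdom} to decide in each instance whether an LD-code of $G$ survives as an LD-set of $\overline{G}$.
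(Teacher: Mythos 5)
Everything in your plan except the wheel case coincides with the paper's own proof: known values from \cite{bchl,ours3,slater88} (or the standard twin arguments) for the dense families, additivity over components for $\overline{K_n}=K_1+\dots+K_1$, $\overline{K_{1,n-1}}=K_1+K_{n-1}$, $\overline{K_{r,n-r}}=K_r+K_{n-r}$, a direct description of $\overline{K_2(r,s)}$, Lemma~\ref{cnpnpn-1} for $\overline{P_n}$ and $\overline{C_n}$, the decomposition $\overline{W_n}=K_1+\overline{C_{n-1}}$, and hand-checking of the small orders. But on the one entry you yourself flag as the main obstacle, $\lambda(W_n)$, your proposal has a genuine gap: you do not give the argument, you only announce that one is needed, and your framing of it is wrong. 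You claim it ``does not reduce to an already-solved family and requires an explicit locating-dominating construction on the rim together with a matching lower bound''; in fact the reduction to an already-solved family is precisely the paper's (short) argument, and no new construction or cycle-style lower-bound analysis is required.

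The missing idea is this. Write $W_n=K_1\vee C_{n-1}$. For $n\ge 7$ any LD-code $S$ of $C_{n-1}$ has $|S|=\lceil 2(n-1)/5\rceil\ge 3$, while every vertex of the cycle has degree $2$; hence no vertex of $C_{n-1}$ dominates $S$, i.e.\ every LD-code of $C_{n-1}$ is global in the sense of Section~2. Consequently, viewed inside $W_n$, the hub is the unique vertex $x\notin S$ with $N(x)\cap S=S$, rim vertices outside $S$ keep their (pairwise distinct, nonempty) traces $N(v)\cap S$, and so $S$ is an LD-set of $W_n$, giving $\lambda(W_n)\le\lambda(C_{n-1})$. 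Combined with the reverse inequality (an LD-code of $W_n$ restricted to the rim distinguishes rim vertices, since the hub is adjacent to all of them), this yields $\lambda(W_n)=\lambda(C_{n-1})=\lceil\frac{2n-2}{5}\rceil$, which is the table entry. Note that this is the same mechanism (Proposition~\ref{pro.vertexdom}) you already invoke elsewhere; applying it to the join $K_1\vee C_{n-1}$ rather than to a complement is what closes the case you left open.
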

\begin{proof}
  The values of the location-domination number of all these families, except the wheels,  are already known (see \cite{bchl,ours3,slater88}).
Next, let us calculate the values of the location-domination number for the wheels and for the complements of all these families and also, 
from the results previously proved, the global location-domination number of them. 

\begin{itemize}

\item For paths, cycles and wheels of small order, the values of $\lambda (G)$ and $\lambda_g (G)$ can easily be checked by hand (see Table \ref{tab.valorspetits}).

\item If $n\ge 7$, then $\lambda (W_n)=\lambda (C_{n-1})=\lceil \frac {2n-2}5 \rceil$, since (i) $W_n=K_1\vee C_{n-1}$, (ii) every LD-code $S$ of $C_{n-1}$ is an LD-set of $W_n$, 
and (iii) every LD-code of  $C_{n-1}$ is global.

 \item $\lambda (\overline{K_n})=\lambda (K_1+\dots +K_1)=\lambda (K_1)+\dots +\lambda (K_1) =n$.

 \item $\lambda (\overline{K_{1,n-1}})=\lambda (K_1+ K_{n-1})=\lambda (K_1)+\lambda (K_{n-1})=1+(n-2)=n-1$.

 \item $\lambda (\overline{K_{r,n-r}})=\lambda (K_r + K_{n-r})=\lambda (K_r) +\lambda(K_{n-r})=(r-1)+(n-r-1)=n-2, \,\textrm{ if }2\le r\le n-r$.

\item The complement of the bi-star $K_2(r,s)$, with $s=n-r$, is the graph obtained by joining a vertex $v$ to exactly $r$ vertices of a complete graph of order $r+s$ and joining a vertex $w$ to the remaining $s$ vertices of the complete graph of order $r+s$. It is immediate to verify that the set containing all vertices except $w$, a vertex adjacent to $v$ and a vertex adjacent to $w$ is an LD-code of $\overline{K_2(r,s)}$ with $n-3$ vertices. Thus, $\lambda (\overline{K_2(r,s)})=n-3$.

\item  For every $n\ge7$,  $\lambda (\overline{P_n})=\lambda (\overline{C_n})=\lceil \frac {2n-2}5 \rceil$. 
This result is a direct consequence of Lemma \ref{cnpnpn-1} and the fact that  $\lambda(P_n)=\lambda(C_n)=\lceil \frac {2n}5 \rceil$.

\item According to Lemma \ref{cnpnpn-1}, $\lambda (\overline{W_n})= \lambda (K_1 + \overline{C_{n-1}})=  \lambda (K_1) + \lambda(\overline{C_{n-1}})=1+\lambda (P_{n-2})=1+\lceil  2(n-2)/5 \rceil=\lceil  {(2n+1)}/5 \rceil.$

\end{itemize}
\vspace{-.6cm}\end{proof}

\begin{theorem}\label{rosellon}
Let  $G$ be a graph of order $n\ge1$. 
If $G$ belongs to the set $\{P_n,C_n,W_n,K_n,K_{1,n-1},K_{r,n-r},K_2(r,n-r)\}$, 
then $\lambda_g (G)$  is known and it is displayed in Tables \ref{tab.valorspetits} and  \ref{tab.valors}.
\end{theorem}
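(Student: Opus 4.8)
The plan is to combine the values of $\lambda(G)$ and $\lambda(\overline{G})$ already obtained in Proposition \ref{donosti} with Corollary \ref{cor.lambdaglobalmayor}, which pins down $\lambda_g(G)$ once these two numbers are known. That corollary splits the argument into two regimes, so I would first compare $\lambda(G)$ with $\lambda(\overline{G})$ family by family---and, for $P_n$, $C_n$ and $W_n$, residue class by residue class modulo $5$, using the ceilings recorded in Proposition \ref{donosti}.

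In the regime $\lambda(G)\neq\lambda(\overline{G})$ nothing further is required: by the first item of Corollary \ref{cor.lambdaglobalmayor} we have $\lambda_g(G)=\max\{\lambda(G),\lambda(\overline{G})\}$, which is exactly the tabulated entry. This settles $K_n$ and the bi-star $K_2(r,n-r)$ outright, together with those paths, cycles and wheels whose order forces the relevant ceilings apart.

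The real content lies in the regime $\lambda(G)=\lambda(\overline{G})$, where Corollary \ref{cor.lambdaglobalmayor} only gives $\lambda_g(G)\in\{\lambda(G),\lambda(G)+1\}$. Here I would invoke Proposition \ref{pro.lambdaglobal}, equivalently the characterization (Proposition \ref{pro.vertexdom}) that an LD-code is non-global exactly when a unique vertex outside it dominates it. To certify $\lambda_g(G)=\lambda(G)$ it then suffices to exhibit one global LD-code. For the dense families this is explicit: for $K_{1,n-1}$ take the centre together with all but one leaf, and for $K_{r,n-r}$ take all but one vertex of each part, checking that the one or two omitted vertices fail to dominate the chosen set because of the same-part vertices still present (this uses $r\geq 2$). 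For the equal residue classes of $P_n$, $C_n$ and $W_n$ I would begin from a standard LD-code and, should it happen to be dominated by an outside vertex, slide it to an equally large LD-code admitting no dominating vertex, again giving $\lambda_g=\lambda$. The opposite verdict $\lambda_g(G)=\lambda(G)+1$ occurs only for the small exceptional cases gathered in Table \ref{tab.valorspetits}, most conspicuously $C_5$, where instead one must show that every LD-code is non-global.

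I expect this last point to be the main obstacle: proving $\lambda_g(G)=\lambda(G)+1$ forces one to reason about all LD-codes simultaneously rather than to produce a single witness, so it resists a one-line construction. Together with the residue bookkeeping for $P_n$, $C_n$ and $W_n$, where one must confirm both that the tabulated value is attained by a global LD-set and that it cannot be lowered, this is where the care concentrates; the remaining entries reduce to direct reads from Proposition \ref{donosti} and Corollary \ref{cor.lambdaglobalmayor}.
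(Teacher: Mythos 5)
Your proposal takes essentially the same route as the paper: read off $\lambda(G)$ and $\lambda(\overline{G})$ from Proposition \ref{donosti}, apply Corollary \ref{cor.lambdaglobalmayor} outright when the two values differ (e.g.\ $K_n$, the bi-star, and the path/cycle/wheel orders where the ceilings separate), and otherwise settle the case $\lambda(G)=\lambda(\overline{G})$ via Proposition \ref{pro.lambdaglobal} by exhibiting a global LD-code, with the small exceptions ($P_5$, $C_5$) handled by showing every LD-code is non-global. The paper's proof is merely terser, asserting the existence of global LD-codes for $P_n$, $C_n$, $W_n$, $K_{1,n-1}$, $K_{r,s}$ without writing out the explicit witnesses and checks you describe.
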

\begin{proof}
 By Corollary \ref{cor.lambdaglobalmayor},
$\lambda_g (K_n)=n$ and $\lambda_g (K_2(r,s))=n-2$. Since graphs $P_n$,  $C_n$, $W_n$, $K_{1,n-1}$,  $K_{r,s}$ and $K_2(r,n-r)$ contain at least an LD-global code,  by Proposition 
\ref{pro.lambdaglobal} we have $\lambda_g(G)=\max \{ \lambda (G), \lambda (\overline{G}) \}$ for all of them.
\end{proof}

\vspace{1cm}
\begin{table}
\begin{center}
  \begin{tabular}{c||cccccc|ccc|ccc}
          &  $P_1$  &  $P_2$  &  $P_3$  &  $P_4$   &  $P_5$  &  $P_6$  & $C_4$ & $C_5$ & $C_6$ & $W_5$ & $W_6$ & $W_7$
        \\
                  \hline
         $\lambda (G)$            & 1 & 1 & 2 & 2 & 2 & 3 & 2 & 2 & 3 & 2  & 3  & 3  \\
         $\lambda (\overline{G})$ & 1 & 2 & 2 & 2 & 2 & 3  & 2 & 2 & 3 & 3  & 3  &  4   \\
         $\lambda_g (G)=\lambda_g (\overline{G})$          & 1 & 2 & 2 & 2 & 3 & 3 &  2 & 3 & 3 & 3  &  3 &  4   \\
         \hline
\end{tabular}
\end{center}
\caption{The values of  $\lambda (G)$, $\lambda (\overline{G})$ and $\lambda_g (G)$ of small paths, cycles and wheels.}
\label{tab.valorspetits}
\end{table}

\begin{table}[hbt]
\begin{center}
  \begin{tabular}{c||ccc|c|cc|c}
          &  $P_n$, {\scriptsize $n\ge 7$} &  $C_n$, {\scriptsize $n\ge 7$} & $W_n$, {\scriptsize $n\ge 8$} & $K_n$, {\scriptsize $n\ge 2$} & $K_{1,n-1}$, {\scriptsize $n\ge 4$} &  $K_{r,n-r}$, {\scriptsize $2\le r\le n-r$}  & $K_2${\scriptsize$(r,n-r)$}, {\scriptsize $2\le r\le n-r$} \\
                  \hline
         $\lambda (G)$            & $\lceil \frac {2n}5 \rceil$   & $\lceil \frac {2n}5 \rceil$ & $\lceil \frac {2n-2}5 \rceil$  & $n-1$ &  $n-1$ & $n-2$ & $n-2$ \\ 
         $\lambda (\overline{G})$ & $\lceil \frac {2n-2}5 \rceil$ & $\lceil \frac {2n-2}5 \rceil$ & $ \lceil \frac {2n+1}5 \rceil$  & $n$ & $n-1$ & $n-2$ & $n-3$ \\
         $\lambda_g (G)=\lambda_g (\overline{G})$         & $\lceil \frac {2n}5 \rceil$  & $\lceil \frac {2n}5 \rceil$  &  $ \lceil \frac {2n+1}5 \rceil$ & $n$    & $n-1$ & $n-2$  & $n-2$ \\
         \hline
\end{tabular}
\end{center}
\caption{The values of  $\lambda (G)$, $\lambda (\overline{G})$ and $\lambda_g (G)$ for some families of graphs.}
\label{tab.valors}
\end{table}

\newpage
\section{Global location-domination in block-cactus}

This section is devoted to characterizing those block-cactus $G$  satisfying $\lambda (\overline{G}) =\lambda (G)  + 1$.
By Proposition \ref{pro.globalImplica}, this equality is feasible only for graphs without global LD-codes.

We will refer in this section to some specific  graphs, such as  the \emph{paw}, the \emph{bull}; the \emph{banner}  $P$, 
the \emph{complement of the banner}, $\overline{P}$; the \emph{butterfly} and the \emph{corner} $\textsf{L}$ (see Figure \ref{fig.somegrafs}).

\begin{figure}[hbt]
\begin{center}
\includegraphics[height=1.7cm]{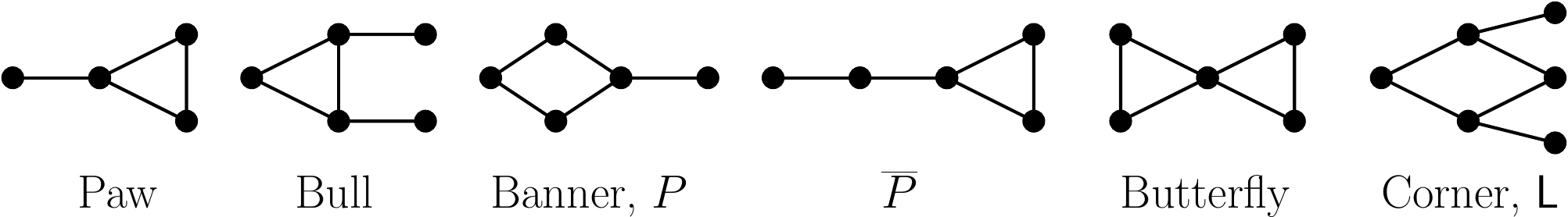}
\caption{Some special graphs.}\label{fig.somegrafs}
\end{center}
\end{figure}

The block-cactus of order at most 2 are $K_1$ and $K_2$.
For these graphs we have $\lambda (K_1)=\lambda (\overline{K_1})=1$ and $\lambda (K_2) =1 < 2=\lambda (\overline{K_2})$.

In \cite{cahemopepu12}, all  16 non-isomorphic graphs with $\lambda (G)=2$ are given.
After carefully examining all cases, the following result is obtained (see Figure \ref{fig.lambda2}).

\begin{figure}[hbt]
\begin{center}
\includegraphics[height=5cm]{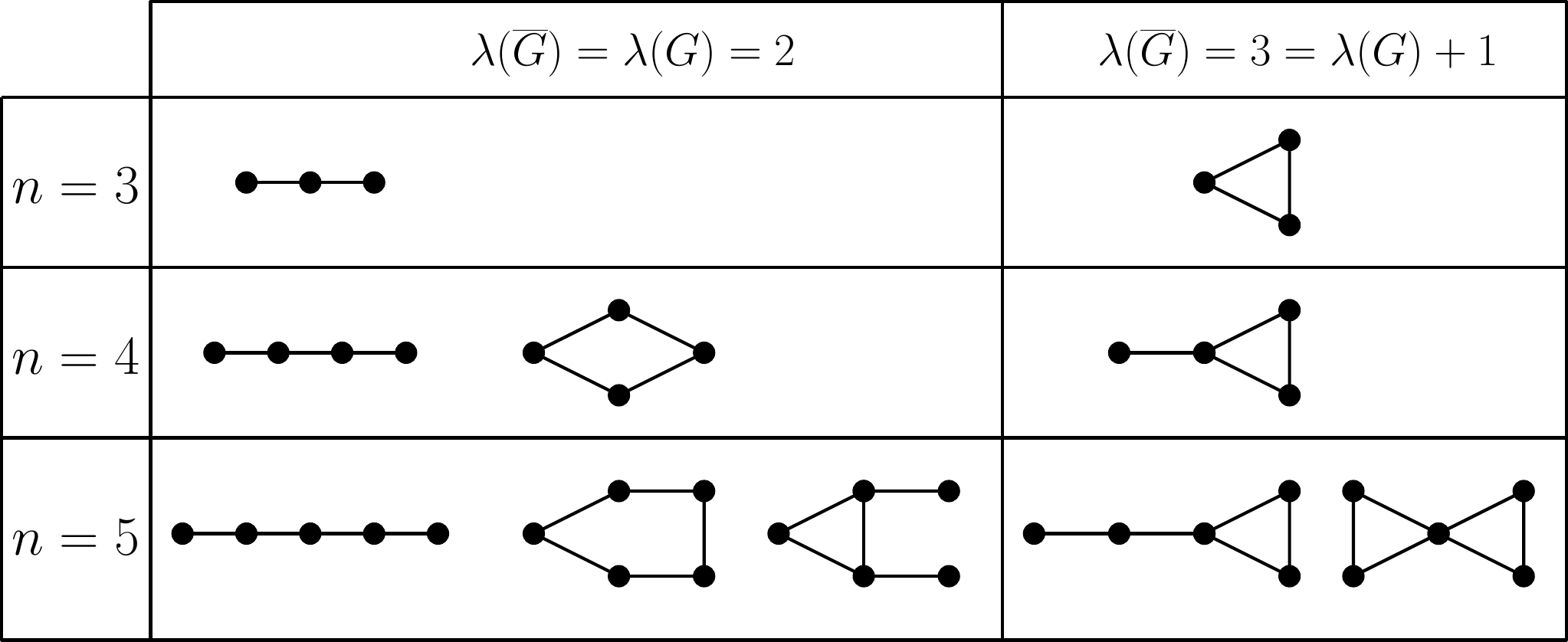}
\caption{All block-cactus with $\lambda (G)=2$.}
\label{fig.lambda2}
\end{center}
\end{figure}

\begin{proposition}\label{pro.lambda2}
  Let $G=(V,E)$ be a block-cactus such that $\lambda (G)=2$.
  Then,  $\lambda (\overline{G}) \ge\lambda (G)$. Moreover, $\lambda (\overline{G}) =\lambda (G)  + 1=3$ if and only if $G$ is isomorphic to the cycle of order 3, the paw, the butterfly or the complement of a banner.
\end{proposition}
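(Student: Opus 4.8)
The plan is to separate the statement into the easy inequality $\lambda(\overline G)\ge\lambda(G)$ and the characterization of the equality $\lambda(\overline G)=\lambda(G)+1=3$, and to reduce the latter to a finite inspection of the graphs drawn in Figure \ref{fig.lambda2}.

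For the inequality, I would first record that a graph $H$ has $\lambda(H)=1$ only when $|V(H)|\le 2$: if $\{s\}$ is an LD-set, then domination forces $s$ to be adjacent to every other vertex, so each vertex outside $\{s\}$ has the same trace $\{s\}$, and locating then allows at most one such vertex. Since a block-cactus is connected and $\lambda(G)=2$ forces $|V(G)|\ge 3$, the complement $\overline G$ also has at least three vertices, whence $\lambda(\overline G)\ge 2=\lambda(G)$. (Equivalently, $\lambda(\overline G)=\lambda(G)-1$ would force $\overline G\in\{K_1,K_2\}$, that is $G\in\{K_1,2K_1\}$, which is excluded.)

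For the characterization, Theorem \ref{cor.difuno} gives $\lambda(\overline G)\le\lambda(G)+1=3$, so with the first part $\lambda(\overline G)\in\{2,3\}$, and the claim reduces to deciding, for each relevant $G$, whether $\overline G$ admits an LD-set of size $2$. The enabling finiteness remark is that if $S=\{a,b\}$ is an LD-code of $G$, then each vertex of $V\setminus S$ carries one of only three nonempty traces $\{a\},\{b\},\{a,b\}$, each used at most once, so $|V(G)|\le 5$; hence there are only finitely many graphs with $\lambda(G)=2$, classified in \cite{cahemopepu12}, and the block-cactus among them are exactly those of Figure \ref{fig.lambda2}. I would then examine this finite list. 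For the four claimed graphs I would complement directly: $\overline{C_3}=\overline{K_3}=3K_1$, $\overline{\mathrm{paw}}=P_3+K_1$, $\overline{\mathrm{butterfly}}=C_4+K_1$, and $\overline{\overline P}=P$ is the banner. Each has location-domination number $3$ (an isolated vertex lies in every LD-set, so $\lambda(3K_1)=3$, $\lambda(P_3+K_1)=\lambda(P_3)+1=3$, $\lambda(C_4+K_1)=\lambda(C_4)+1=3$, while a short by-hand check over all $2$-subsets gives $\lambda(P)=3$), so $\lambda(\overline G)=3$ for these $G$. For every other block-cactus $G$ in the list I would exhibit an explicit $2$-element LD-set of $\overline G$, certifying $\lambda(\overline G)=2$; when convenient this set is a global LD-code of $G$, detected through Proposition \ref{pro.vertexdom} by checking that no vertex dominates it, and otherwise—most notably for the self-complementary $C_5$—I would display a suitable $2$-set in $\overline G$ directly.

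The main obstacle is the completeness of this case analysis, i.e. the ``only if'' direction: one must verify that every block-cactus with $\lambda(G)=2$ other than the four listed truly has a size-$2$ LD-set in its complement. The subtle point, illustrated by $C_5$, is that the absence of a global LD-code of $G$ does \emph{not} by itself force $\lambda(\overline G)=3$, so the argument cannot be routed entirely through global LD-codes and must test $\overline G$ on its own. Proposition \ref{andreu} helps prune candidates—$\lambda(\overline G)=3$ forces $\Delta(G)\ge\lambda(G)=2$, $rad(G)\le 2$, and $diam(G)\le 4$—but the decisive step remains the explicit, graph-by-graph verification against Figure \ref{fig.lambda2}.
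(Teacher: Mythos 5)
Your proposal is correct and takes essentially the same route as the paper: the paper's entire proof consists of citing the classification of the 16 non-isomorphic graphs with $\lambda(G)=2$ from \cite{cahemopepu12} and ``carefully examining all cases,'' which is exactly your finite case-check of the block-cactus in Figure \ref{fig.lambda2}, deciding for each whether $\overline{G}$ admits a $2$-element LD-set. Your supporting details --- the argument that $\lambda(\overline{G})\ge 2$, the $|V(G)|\le 5$ bound explaining the finiteness, the complement computations for $C_3$, the paw, the butterfly and $\overline{P}$, and the warning (via $C_5$) that the absence of a global LD-code does not by itself force $\lambda(\overline{G})=3$ --- are all correct and, if anything, more explicit than what the paper provides.
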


Next, we approach the case $\lambda (G)\ge3$.  
First of all, let us present some lemmas, providing a number of necessary conditions for a given block-cactus  to have at least a non-global LD-set.

\begin{lemma}\label{lema1}
Let $G=(V,E)$ be a block-cactus and $S\subseteq V$ a non-global LD-set of $G$.
If $u\in V\setminus S$ dominates $S$, then $G[N(u)]$ is a disjoint union of cliques.
\end{lemma}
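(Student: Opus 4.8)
The plan is to reduce the conclusion to a forbidden-subgraph condition and then rule out the forbidden configuration using the block decomposition of $G$. Recall that a graph is a vertex-disjoint union of cliques precisely when it contains no induced path $P_3$ on three vertices (equivalently, adjacency restricted to each connected component is transitive). So my first step is to observe that it suffices to prove $G[N(u)]$ has no induced $P_3$.

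Next I would argue by contradiction: suppose three vertices $a,b,c\in N(u)$ induce a $P_3$ in $G$, with edges $ab,bc\in E$ and non-edge $ac\notin E$. Since $u$ dominates $S$ and $a,b,c\in N(u)$, the vertex $u$ is adjacent to each of $a,b,c$. Hence $\{u,a,b\}$ and $\{u,b,c\}$ each span a triangle $K_3$ in $G$.

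Then I would exploit the block structure of $G$. Each triangle, being $2$-connected, is contained in a single block; call these blocks $B_1\supseteq\{u,a,b\}$ and $B_2\supseteq\{u,b,c\}$. Because two distinct blocks of a graph share at most one vertex, while $B_1$ and $B_2$ share both $u$ and $b$, they must coincide, so $u,a,b,c$ all lie in one block $B$. As $G$ is a block-cactus, $B$ is either a complete graph or a cycle. In the complete case $a$ and $c$ are adjacent, contradicting $ac\notin E$; in the cycle case $u$ would have the three distinct neighbors $a,b,c$ inside $B$, which is impossible since every vertex of a cycle has degree exactly $2$. Either way we contradict the assumed induced $P_3$, so $G[N(u)]$ is a disjoint union of cliques.

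The hard part — indeed the only nontrivial step — will be merging the two triangles into a single block; this rests on the two standard facts that a triangle is $2$-connected (hence confined to one block) and that distinct blocks overlap in at most one vertex. Once $u,a,b,c$ are corralled into a common block, the block-cactus dichotomy closes both cases immediately. I would also note that the hypotheses that $S$ is a non-global LD-set and that $u$ dominates $S$ are used only to guarantee $a,b,c\in N(u)$: the conclusion in fact holds for the open neighborhood of \emph{any} vertex of a block-cactus.
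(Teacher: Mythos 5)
Your proof is correct and follows essentially the same route as the paper: both argue by contradiction, place $u$ together with three of its neighbors (two of them non-adjacent) inside a single block, and then apply the block-cactus dichotomy---the block cannot be a clique because of the missing edge, and cannot be a cycle because $u$ would have degree at least $3$ in it. The only cosmetic difference is how the common block is produced (you merge two triangles sharing the edge $ub$ via the fact that distinct blocks meet in at most one vertex, whereas the paper takes a path joining the two non-adjacent neighbors inside a component of $G[N(u)]$ and uses the resulting cycle through $u$); your closing remark that the LD-set hypotheses are never actually used applies equally to the paper's own proof.
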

\begin{proof}
Let $x,y$ be a pair of vertices belonging to the same component $H$ of $G[N(u)]$.
Suppose that $xy\not\in E$ and take an $x-y$ path $P$ in $H$.
Let $z$ be an inner vertex of $P$.
Notice that the set $\{u,x,y,z\}$ is contained in the same block $B$ of $G$.
As $B$ is not a clique, it must be a cycle, a contradiction, since $deg_B(u)\ge3$.
\end{proof}

\begin{lemma}\label{lema2}
Let $G=(V,E)$ be a block-cactus and $S\subseteq V$ a non-global LD-set of $G$.
If $u\in V\setminus S$ dominates $S$ and $W=V\setminus N[u]$, then, for every vertex $w\in W$, the  following properties hold.

 \begin{itemize}

   \item[i)]  $1\leq \mid N(u)\cap N(w)\mid \leq 2$.
	
	\item[ii)]  If $N(u)\cap N(w)=\{x\}$, then $x\in S$.
	
   \item[iii)]  If $N(u)\cap N(w)=\{x,y\}$, then $xy\not\in E$.

   \item[iv)] If $w'\in W$ and $N(u)\cap N(w)=N(u)\cap N(w')=\{x\}$, then $w'=w$.
	
   \item[v)] If $w'\in W$ and $|N(u)\cap N(w)|=|N(u)\cap N(w')|=2$, then $N[w]\cap N[w']=\emptyset$.
	
 \end{itemize}
 \end{lemma}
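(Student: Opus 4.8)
The plan is to work throughout with the structural hypotheses: $u$ is the unique vertex of $V\setminus S$ with $S\subseteq N(u)$, $W=V\setminus N[u]$, and consequently $S\cap W=\emptyset$ (since $S\subseteq N(u)$ while no vertex of $W$ lies in $N[u]$); in particular every $w\in W$ satisfies $w\notin S$. By Proposition~\ref{andreu} the eccentricity of $u$ is at most $2$, so each $w\in W$ lies at distance exactly $2$ from $u$. The one combinatorial tool I would use repeatedly is elementary block theory specialized to block-cacti: every $2$-connected subgraph lies inside a single block, every edge lies in a unique block, two distinct blocks meet in at most one vertex, and each block is a cycle or a complete graph. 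From these I would first isolate a \emph{key claim}: if $w\in W$ and $p,q$ are two distinct common neighbours of $u$ and $w$, then $\{u,p,w,q\}$ induces a $4$-cycle that is exactly a block of $G$. Indeed $u\not\sim w$ forces the $4$-cycle $u\!-\!p\!-\!w\!-\!q\!-\!u$ into a non-complete block, hence into a cycle block, and tracking the degrees of $u,p,w,q$ (each exactly $2$ inside a cycle block) pins that block down to this very $4$-cycle; in particular $pq\notin E$, which is precisely part~(iii).

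With this claim in hand, parts (i) and (ii) are short. Since $w\notin S$ and $S$ is dominating, $N(w)\cap S\neq\emptyset$, and as $S\subseteq N(u)$ this set is contained in $N(u)\cap N(w)$, giving $|N(u)\cap N(w)|\ge 1$; if moreover $N(u)\cap N(w)=\{x\}$, the dominating neighbour must be $x$, so $x\in S$, which is part~(ii). For the upper bound of (i) I would assume three distinct common neighbours $x,y,z$ of $u$ and $w$; the key claim then produces two $4$-cycle blocks $uxwy$ and $uxwz$, both containing the three vertices $u,x,w$, contradicting that distinct blocks share at most one vertex (and they cannot coincide, having different vertex sets). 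Hence $|N(u)\cap N(w)|\le 2$.

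Part (iv) I would settle purely by the locating property. If $N(u)\cap N(w)=\{x\}$, then by (ii) $x\in S$, so $N(w)\cap S=\{x\}$; the same reasoning gives $N(w')\cap S=\{x\}=N(w)\cap S$. As $w,w'\in V\setminus S$ and $S$ is an LD-set, equal traces force $w=w'$.

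The real work is part (v), which I expect to be the main obstacle. Assuming $w\neq w'$ (the only nontrivial case), write $N(u)\cap N(w)=\{x,y\}$ and $N(u)\cap N(w')=\{x',y'\}$. The key claim gives $4$-cycle blocks $C=uxwy$ and $C'=ux'w'y'$; since distinct blocks share at most the vertex $u$, and since $C=C'$ would force $w=w'$ (as $w,w'$ are the vertices of these $4$-cycles antipodal to $u$), I obtain $C\neq C'$ with $C\cap C'=\{u\}$, whence $\{x,y\}\cap\{x',y'\}=\emptyset$ and $w,w'$ avoid each other's cycles. Now I would exclude every candidate vertex of $N[w]\cap N[w']$. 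A common neighbour lying in $N(u)$ would belong to $\{x,y\}\cap\{x',y'\}=\emptyset$, impossible; a common neighbour $z\notin N[u]$, or an adjacency $w\sim w'$, combines with the length-$4$ path $w\!-\!x\!-\!u\!-\!x'\!-\!w'$ (whose edges $wx,ux$ lie in the block $C$) to produce a cycle of length $6$ or $5$. Being $2$-connected this cycle lies in a single block, and since it reuses the edge $ux\in C$ that block must be $C$ --- impossible, as $C$ is a $4$-cycle on only four vertices. The delicate points I would check carefully are that all vertices of the constructed cycles are genuinely distinct (using $x\neq x'$ and $w,w',z\notin N[u]$) and that a single reused edge already pins the new cycle inside the too-small block $C$. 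Together these rule out $w=w'$, $w\sim w'$, and any common neighbour, so $N[w]\cap N[w']=\emptyset$.
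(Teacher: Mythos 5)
Your proposal is correct and follows essentially the same route as the paper: both rest on the observation that $u$, $w$, and their common neighbours span a $2$-connected subgraph lying in a single block, which (being non-complete, as $uw\notin E$) must be a cycle and in fact a $C_4$, yielding (i)--(iii); part (iv) is the same LD-trace argument in both; and (v) is in both cases a contradiction obtained by trapping a too-large $2$-connected subgraph inside a block. The only cosmetic difference is in (v): you exploit that each $4$-cycle is itself a block (so distinct blocks share at most one vertex, and the edge $ux$ determines its block), whereas the paper merges everything into one block and argues it is neither a clique (since $uw\notin E$) nor a cycle (since $\deg(u)\ge 3$); the two mechanisms are interchangeable here.
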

 \begin{proof}
i),ii),iii): $\mid N(u)\cap N(w)\mid \geq 1$ as $S\subset N(u)$ and $S$ dominates vertex $w$.
	If  $N(u)\cap N(w)=\{x\}$, then necessarily $x\in S$.
	Assume that $ \mid N(u)\cap N(w)\mid >1$.
	Observe that the set $N[u]\cap N[w]$ is contained in the same block $B$ of $G$.
	Certainly, $B$ must be a cycle since $uw\not\in E$.
	Hence, $\mid N(u)\cap N(w)\mid =2$.
	Moreover, in this case $B$ is isomorphic to the cycle $C_4$, which means that, if $V(B)=\{u,x,y,w\}$,  then $xy\not\in E$.
	
\noindent iv): If  $w'\neq w$, then $S\cap N(w)\neq S\cap N(w')$, as $S$ is an LD-set.

\noindent  v): Suppose that $w\neq w'$, $ N(u)\cap N(w)=\{x,y\}$ and  $N(u)\cap N(w')=\{z,t\}$.
Notice that $\{x,y\}\neq \{z,t\}$, since $S$ is an LD-set.
If $y=z$, then the set $\{u,w,w',x,y,t\}$ is contained in the same block $B$ of $G$, a contradiction,
because  $B$  is neither a clique, since $uw\not\in E$, nor a cycle, as $deg_G(u)\ge3$.
Assume thus that $\{x,y\}\cap \{z,t\}=\emptyset$.
If either $ww'\in E$ or $N(w)\cap N(w')\neq\emptyset$, then the set $\{u,w,w',x,y,z, t\}$ is contained in the same block $B$ of $G$, again a  contradiction,
because  $B$ is neither a clique, since $uw\not\in E$, nor a cycle, as $deg_G(u)\ge4$.
\end{proof}

 \begin{lemma}
Let $G=(V,E)$ be a block-cactus and $S\subseteq V$ a non-global LD-set of $G$.
If $u\in V\setminus S$ dominates $S$ and $W=V\setminus N[u]$, then
 \begin{itemize}
   \item Every component of $G[W]$ is isomorphic either to $K_1$ or to $K_2$.
 \item If $w,w'\in W$ and $ww'\in E$, then the set $\{w,w'\}$ is contained in  the same block,  which is isomorphic to $C_5$.
 \end{itemize}
 \end{lemma}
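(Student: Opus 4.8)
The plan is to exploit the machinery already established in Lemma~\ref{lema2}, which constrains how each $w\in W$ attaches to the dominating vertex $u$ through the neighborhood $N(u)\cap N(w)$. We are given a block-cactus $G$ with a non-global LD-set $S$, a vertex $u\in V\setminus S$ dominating $S$, and $W=V\setminus N[u]$. Two claims must be proved: first, that every component of $G[W]$ is a $K_1$ or a $K_2$; second, that any edge $ww'$ with $w,w'\in W$ lies in a block isomorphic to $C_5$.

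For the \emph{first claim}, I would argue that $G[W]$ cannot contain three vertices forming a path or a triangle, which suffices (together with the block-cactus structure) to force each component to be $K_1$ or $K_2$. First I would rule out a triangle: if $w,w',w''\in W$ were mutually adjacent they would lie in a common block $B$; since each of them is non-adjacent to $u$ but is dominated by $S\subseteq N(u)$, the vertices $u,w,w',w''$ together with their $S$-neighbors cannot fit into a single clique or a single cycle, contradicting the block-cactus hypothesis much as in the proof of Lemma~\ref{lema2}(v). Then I would rule out an induced path $w-w'-w''$ in $W$: here $w$ and $w''$ are nonadjacent, so by the cactus property the configuration involving $u$ and the common $S$-neighbors is again forced into a single block that can be neither a clique (as $uw\notin E$) nor a cycle (as $\deg(u)$ is too large), giving a contradiction. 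Consequently no component of $G[W]$ has more than two vertices, and being a connected subgraph of a simple graph on at most two vertices, each is $K_1$ or $K_2$.

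For the \emph{second claim}, suppose $w,w'\in W$ with $ww'\in E$. By Lemma~\ref{lema2}(i) each of $w,w'$ has between one and two neighbors in $N(u)$, and I expect the key step to be showing that both must have exactly two such neighbors, sharing a specific pattern that closes up into a $5$-cycle $u,x,w,w',y$ (with $x,y\in N(u)$). The edge $ww'$ forces $w,w'$ into a common block $B$. Since $u\notin N[w]$ and $u\notin N[w']$ but $u$ is joined to the $S$-neighbors of both, tracing the short closed walk $u\to x\to w\to w'\to y\to u$ shows this walk lies in a single block; that block is not a clique (because $uw\notin E$), so it is a cycle, and counting its vertices via parts (i)--(iii) of Lemma~\ref{lema2} pins the cycle length to five, i.e.\ $B\cong C_5$.

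The main obstacle will be the careful case analysis in the second claim: I must show that the cases $|N(u)\cap N(w)|=1$ (or $|N(u)\cap N(w')|=1$) either cannot occur or still force the same $C_5$, and that the two private $S$-neighbors $x,y$ are distinct (ruling out a shorter cycle). Here the uniqueness statements in Lemma~\ref{lema2}(ii),(iv) and the disjointness in (v) do most of the bookkeeping, while the block-cactus constraint --- every block is a clique or a cycle, and $\deg_B(u)\ge 3$ inside any block containing $u$ --- repeatedly eliminates the degenerate configurations.
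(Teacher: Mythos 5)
Your overall skeleton matches the paper's: both arguments rest on Lemma \ref{lema2}, force the edge $ww'$ into a single block together with $u$ and the relevant $S$-neighbours, and use the block-cactus dichotomy (every block is a clique or a cycle, and this block is not a clique because $uw\notin E$) to conclude that the block is a $C_5$. However, your stated ``key step'' for the second claim is false, and the proof would collapse if you tried to carry it out. You claim one must show that both $w$ and $w'$ have exactly \emph{two} neighbours in $N(u)$, and you relegate $|N(u)\cap N(w)|=1$ to a degenerate case to be ruled out. This is exactly backwards. If $|N(u)\cap N(w)|=|N(u)\cap N(w')|=2$, then Lemma \ref{lema2}(v) gives $N[w]\cap N[w']=\emptyset$, which is impossible precisely because $ww'\in E$ puts $w$ and $w'$ into $N[w]\cap N[w']$. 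Moreover, your target conclusion is itself incompatible with your key step: in the $C_5$ block $u$--$x$--$w$--$w'$--$y$--$u$ each of $w,w'$ has exactly \emph{one} neighbour in $N(u)$. The correct case analysis runs as follows: part (v) kills the case ``both have two''; in the mixed case (say $N(u)\cap N(w)=\{x_1,x_2\}$ and $N(u)\cap N(w')=\{y\}$) the $4$-cycle $u,x_1,w,x_2$ and the connection $w$--$w'$--$y$--$u$ (or the triangle $w,w',x_1$ if $y=x_1$) lie in one block which is neither a clique ($uw\notin E$) nor a cycle, a contradiction; hence both $w$ and $w'$ have exactly one neighbour in $N(u)$, these neighbours lie in $S$ by (ii) and are distinct by (iv), yielding the $5$-cycle $u,x,w,w',y$, whose block, being a cycle containing it, is exactly $C_5$.

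A secondary difference: for the first claim you propose to exclude $P_3$ and $K_3$ inside $G[W]$ directly; that can be made to work, but your sketch is vague about which block the various vertices share. The paper instead gets the first claim for free from the second: the $C_5$ block is exactly $\{u\}\cup N[w]\cup N[w']$, so $N(w)=\{x,w'\}$ and $N(w')=\{y,w\}$, whence no vertex of $W\setminus\{w,w'\}$ is adjacent to $w$ or $w'$ and every component of $G[W]$ meeting an edge is a $K_2$. Once you repair the case analysis, proving the $C_5$ statement first and reading off the component structure from it is both shorter and cleaner.
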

 \begin{proof}
Let $w,w'$ such that $ww'\in E$.
According to Lemma \ref{lema2}, the set  $\{u\}\cup N[w]\cup N[w']$ forms a block $B$ of $G$, which is isomorphic to the cycle $C_5$.
In particular, no vertex of $W\setminus\{w,w'\}$ is adjacent neither to $w$ nor to $w'$.
 \end{proof}

As a corollary of the previous three lemmas the following proposition is obtained.

\begin{proposition}\label{propblocks}
  Let $G=(V,E)$ be a block-cactus and $S\subseteq V$ a non-global LD-set of $G$.
	
If $u\in V\setminus S$ dominates $S$, then $G$ can be obtained by identifying the vertex $u$ of some copies of each of the following graphs  (see Figure \ref{fig.blocs}):
   \begin{itemize}
     \item[a)] $u$ is adjacent to every vertex of a complete graph $K_r$,  $r\ge 1$,  and each one of the vertices of $K_r$ is adjacent to at most a new vertex of degree $1$;
     \item[b)] $u$ is a vertex of a cycle of order $4$, and each neighbor of $u$ is adjacent to at most a new vertex of degree $1$;
     \item[c)] $u$ is a vertex of a cycle of order 5.
   \end{itemize}
   \end{proposition}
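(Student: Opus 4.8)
The plan is to read off the global structure of $G$ from the three layers around $u$, namely $\{u\}$, the neighbor set $N(u)$, and $W=V\setminus N[u]$. Recall from Proposition~\ref{andreu} that $ecc(u)\le 2$, so every vertex of $W$ lies at distance exactly $2$ from $u$ and is therefore adjacent to at least one vertex of $N(u)$. First I would apply Lemma~\ref{lema1} to write $G[N(u)]$ as a disjoint union of cliques $Q_1,\dots,Q_p$. Since $u$ is adjacent to all of $N(u)$, each $\{u\}\cup Q_i$ induces a complete graph, and because two distinct cliques $Q_i,Q_j$ are non-adjacent in $G[N(u)]$, these complete subgraphs pairwise meet only in $u$. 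This produces the clique backbone of the decomposition.

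Next I would classify the vertices of $W$ by the value of $|N(u)\cap N(w)|$, which by Lemma~\ref{lema2}(i) is $1$ or $2$. If $N(u)\cap N(w)=\{x,y\}$, then $xy\notin E$ by Lemma~\ref{lema2}(iii), so $\{u,x,w,y\}$ induces a $C_4$ that is a block; by Lemma~\ref{lema2}(v) any two such vertices have disjoint closed neighborhoods, hence distinct $C_4$'s of this kind share only $u$. If instead $N(u)\cap N(w)=\{x\}$, then by the last of the three lemmas either $w$ is isolated in $G[W]$, or $w$ has a unique neighbor $w'\in W$ with $\{w,w'\}$ lying in a block isomorphic to $C_5$; together with $u$ this block is a $C_5$ on the vertices $u,x,w,w',y$ in this cyclic order, with $x,y\in N(u)$.

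The heart of the argument is to show that these pieces overlap only in $u$ and to pin down which neighbors of $u$ carry a pendant, and here I would use repeatedly that two distinct blocks meet in at most one vertex. Thus a neighbor $x$ of $u$ cannot lie simultaneously in a clique block $\{u\}\cup Q_i$ with $|Q_i|\ge 2$ and in a $C_4$ or $C_5$ block, since any two such blocks would share the two vertices $u$ and $x$; consequently the block through the edge $ux$ is unambiguously a clique, a $C_4$, or a $C_5$. I would then show that any block at $x$ avoiding $u$ must be a single edge $xw$ with $w\in W$: it cannot contain two vertices of $N(u)$, since they would lie on a common cycle through $u$ and force $u$ into the block, and it cannot contain two adjacent vertices of $W$, since the last lemma would place that edge in a $C_5$ containing $u$ while an edge belongs to a unique block. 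Finally, for such a $w$ I would verify $N(w)=\{x\}$, i.e. $\deg(w)=1$: any further neighbor in $N(u)$ would make $\{u,x,w,\cdot\}$ a $C_4$, and any further neighbor in $W$ would, by the last lemma, place $w$ in a $C_5$, both contradicting that $xw$ is a bridge. Lemma~\ref{lema2}(iv) then gives at most one pendant per vertex $x$, and the same part~(iv) forbids pendants on the neighbors of $u$ inside a $C_5$, because its internal vertices $w,w'$ already realize the singletons $\{x\}$ and $\{y\}$.

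Assembling these facts, every connected component of $G-u$ is a clique with at most one pendant per vertex, a path $x,w,y$ with at most one pendant on each of $x,y$, or a path $x,w,w',y$; re-attaching $u$ to the relevant vertices recovers exactly graphs of types (a), (b) and (c), amalgamated at $u$. I expect the main obstacle to be precisely the overlap analysis of the third paragraph: converting the purely local neighborhood constraints of Lemmas~\ref{lema1} and~\ref{lema2} and the last lemma into the global statement that no neighbor of $u$ belongs to two of the prescribed blocks and that every superfluous attachment is a degree-one vertex.
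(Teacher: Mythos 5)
Your proposal is correct and takes essentially the same route as the paper: the paper states Proposition \ref{propblocks} with no written proof beyond declaring it ``a corollary of the previous three lemmas,'' and your argument is exactly the assembly of Lemma \ref{lema1}, Lemma \ref{lema2} and the lemma on $G[W]$ --- together with the block-cactus facts that every block is a clique or a cycle and that two blocks share at most one vertex --- that the paper leaves implicit. The overlap analysis you single out (each edge $ux$ lies in a unique block, pendant edges are bridges, and part (iv) of Lemma \ref{lema2} limits pendants to one per neighbor and excludes them on the $C_5$ neighbors) is handled correctly and is indeed the only content the paper omits.
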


\begin{figure}[htb]
\begin{center}
\includegraphics[height=3.5cm]{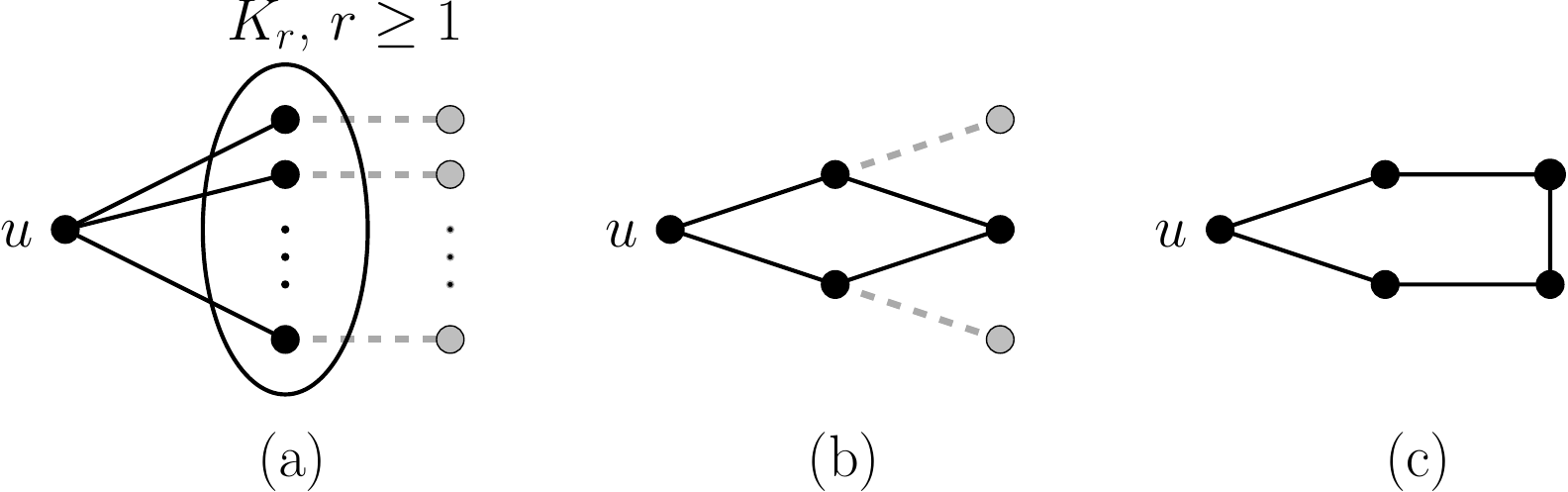}
\caption{Induced subgraphs of a block-cactus having a non-global LD-set whose dominating vertex is $u$. Gray vertices are optional.}\label{fig.blocs}
\end{center}
\end{figure}

In the next theorem, we characterize those block-cactus not containing any global LD-code of order at least $3$.

\begin{theorem}\label{pro.noglobal}
Let $G=(V,E)$ be a block-cactus such that $\lambda (G)\ge 3$.
Then, every LD-code of $G$ is non-global if and only if $G$ is isomorphic to one of the following graphs (see Figure \ref{fig.NoGlobalLDset}):
  \begin{itemize}
    \item[a)] $K_1 \vee (K_1 + K_r)$, $r\ge 3$;

    \item[b)] the graph obtained by joining one vertex of $K_2$ with a vertex of a complete graph of order $r+1$, $r\ge 3$;

    \item[c)] $K_{r+1}$, $r\ge 3$;

    \item[d)] the graph obtained by joining a vertex of $K_2$ with one of the vertices of degree 2 of a corner;

    \item[e)] if we consider the graph  $K_1\vee (K_{r_1}+ \dots+ K_{r_{t}})$ and $t'$ copies of a corner, with $t+t'\ge 2$ and $r_1,\dots , r_{t} \ge 2$, the graph obtained by identifying the vertex $u$ of $K_1$ with one of the vertices of degree 2 of each copy of the corner.
  \end{itemize}
\end{theorem}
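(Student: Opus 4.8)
The theorem being an equivalence, the plan is to prove the two implications separately. Throughout I would use the reformulation of Proposition~\ref{pro.vertexdom}: an LD-code $S$ is global precisely when no vertex of $V\setminus S$ dominates $S$. Hence ``every LD-code of $G$ is non-global'' means exactly that $G$ has no global LD-code, equivalently $\lambda_g(G)=\lambda(G)+1$ by Proposition~\ref{pro.lambdaglobal}. I would also keep in mind, via Proposition~\ref{andreu}, that any vertex dominating an LD-set has degree at least $\lambda(G)\ge 3$, so the only serious candidates to dominate a code are the high-degree vertices of $G$, while the base case $\lambda(G)=2$ is already settled by Proposition~\ref{pro.lambda2}.

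For the necessity direction ($\Rightarrow$), I would begin with any LD-code $S$ of $G$, which exists and, being non-global by hypothesis, admits a unique dominating vertex $u$. Proposition~\ref{propblocks} then pins down the global shape of $G$: it is obtained by identifying $u$ across finitely many blocks of the three admissible types (a clique through $u$ carrying optional pendants, a $C_4$ through $u$ carrying optional pendants, or a $C_5$ through $u$), the cyclic pieces being those captured by the corner $\textsf{L}$. The entire content of this direction is then to decide which of these glued graphs have \emph{no} global LD-code whatsoever; every remaining configuration must be discarded by exhibiting one. The mechanism I would exploit is to build a minimum LD-set that \emph{includes} a low-degree vertex (a pendant, or an interior vertex of a $C_5$/corner), so that the only vertex which could dominate the set, namely its unique neighbour forced by the degree bound, fails to reach the rest of the code. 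Concretely, I would show that two ``small'' appendages, or a small appendage together with two or more large blocks, always leave enough slack to place such a pinning vertex into a code of minimum size, and likewise for a pendant hanging on a clique of order at least $2$ inside a multi-block graph. Ruling these out leaves exactly the five listed families: a single clique (c); a single large block together with exactly one small appendage, the appendage being an isolated pendant at $u$ (a), a two-vertex pendant path at $u$ (b), or a $K_2$ attached to a corner (d); and two or more large blocks, cliques of order at least $2$ and/or corners, with no appendage (e).

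For the sufficiency direction ($\Leftarrow$), I would treat each family (a)--(e) directly: compute $\lambda(G)$, describe the minimum LD-codes, and verify that each one is dominated by some vertex. The recurring argument is that pendants force their support into every code, while a clique of order $m$ forces all but one of its vertices into every code (two omitted clique-vertices being indistinguishable unless separated from outside). Counting these forced vertices shows that in (a)--(e) a minimum LD-code can never afford to contain a pinning vertex: it must omit the natural centre (an apex or a clique-vertex), and that omitted centre then dominates the whole code. For the complete graph (c) this is immediate, and (a), (b), (d), (e) follow the same pattern once one checks that including a pendant or cycle-interior vertex would push the size strictly above $\lambda(G)$.

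The main obstacle I anticipate is precisely the necessity direction's case analysis, namely the explicit construction of a global LD-code in every configuration outside the list. Each such construction must simultaneously keep the set of minimum cardinality, preserve the locating-dominating property across all blocks at once, and guarantee that no vertex dominates it; the bookkeeping is most delicate when corner/$C_5$ blocks are mixed with cliques and pendants, and when the multiplicities $t$, $t'$ and the clique orders $r_i$ interact. Establishing a clean ``slack lemma'' that quantifies, for each block type, how many code-vertices are forced and how many remain free would be the key technical step making the discarding arguments uniform, and I would invest most of the effort there.
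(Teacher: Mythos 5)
Your plan follows the paper's proof essentially step for step: the paper likewise first verifies directly that each of the graphs (a)--(e) admits only non-global LD-codes, and for the converse applies Proposition~\ref{propblocks} to the vertex $u$ dominating a non-global LD-code, counts the forced code-vertices in each component of $G[N(u)]$ (its Lemma~\ref{l0}, which is exactly your ``slack lemma''), and discards every configuration outside the list by exhibiting a minimum-cardinality LD-set containing a leaf or the vertex $u$ itself --- precisely your ``pinning vertex'' mechanism, implemented there as vertex swaps $S'=(S\setminus\{v\})\cup\{z\}$ in Lemmas~\ref{l1}--\ref{l3}. The only difference is organizational rather than conceptual: the paper structures the discarding step as three swap lemmas on the set $\Lambda_u$ of pieces hanging at $u$, then enumerates the admissible multisets $\{P_2,K_r\}$, $\{P_3,K_r\}$, $\{P_3,\textsf{L}\}$, $\{K_r\}$, and collections of cliques and corners, which yield exactly (a)--(e).
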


\begin{figure}[htb]
\begin{center}
\includegraphics[height=5.5cm]{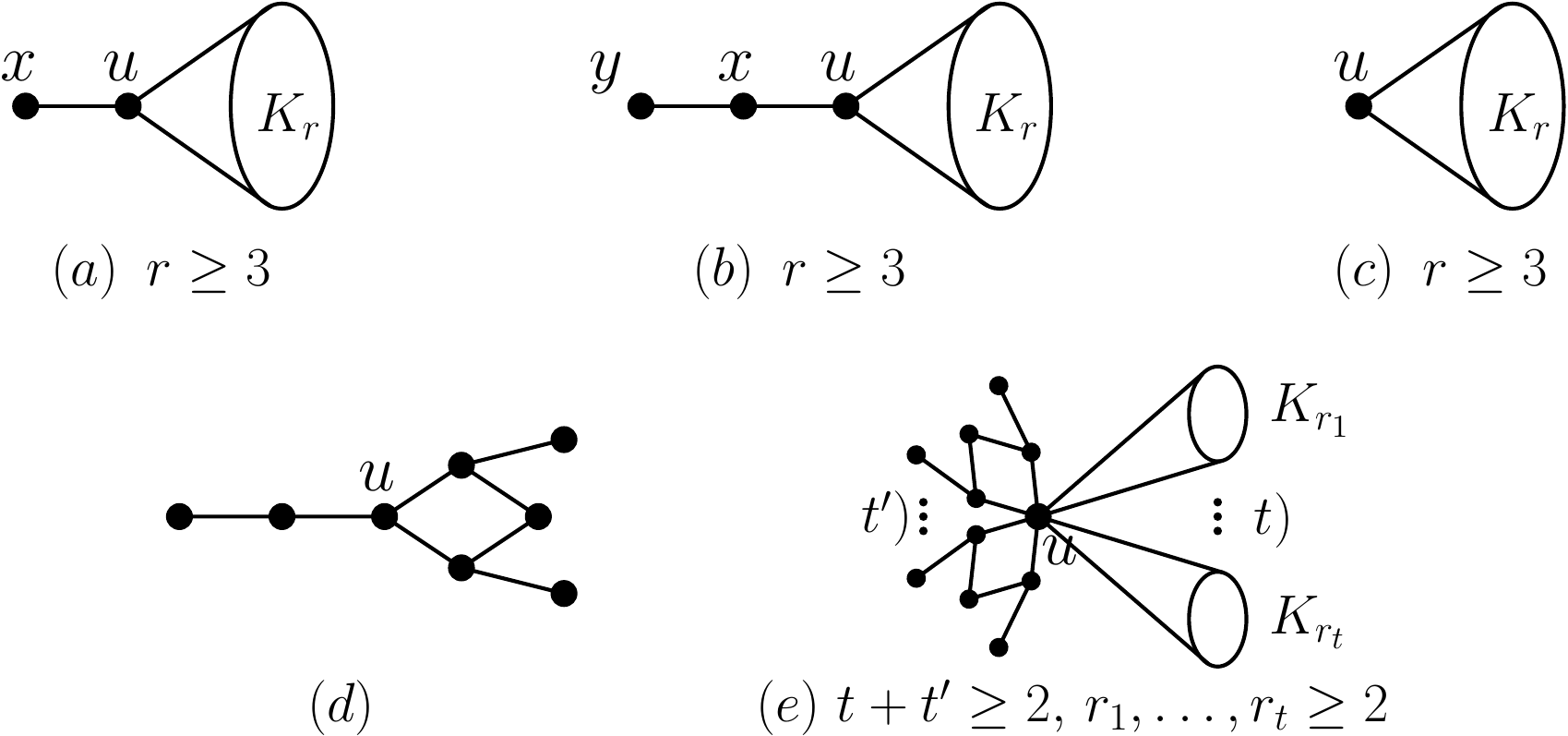}
\caption{Block-cactus with $\lambda(G)\ge3$ not containing any global LD-code.}
\label{fig.NoGlobalLDset}
\end{center}
\end{figure}

\begin{proof} Firstly, let us show that none of these graphs contains a global LD-code.
\begin{itemize}

    \item[a)] Let $G$ be the graph showed in Figure  \ref{fig.NoGlobalLDset}(a).
    Observe that $\lambda(G)=r$ and, for every LD-code $S$, $|S\cap \{x,u\}|=1$ and $|S\cap K_r|=r-1$.
	  Let $w$ be the vertex of $K_r$ not in $S$.
		If $x\in S$, then $S\subset N(u)$. Otherwise, if  $u\in S$, then $S\subset N(w)$.

    \item[b)]  Let $G$ be the graph showed in Figure  \ref{fig.NoGlobalLDset}(b).
		Notice that  $\lambda(G)=r$ and, for every LD-code $S$, $x\in S$ and $|S\cap K_r|=r-1$.
	 Hence , if $S$ is an  LD-code of $G$, then  $S\subset N(u)$.

    \item[c)]  If $G=K_n$ (Figure \ref{fig.NoGlobalLDset}(c)), then $G$ contains no global LD-code.

    \item[d)]  Let $G$ be the graph showed in Figure  \ref{fig.NoGlobalLDset}(d).
    Clearly, the unique LD-code of  $G$ is $S=N(u)$.

   \item[e)]   Let $G$ be the graph showed in Figure   \ref{fig.NoGlobalLDset}(e).
	  In this graph, every LD-code contains both  vertices  adjacent to vertex $u$ in each copy of the corner and, for every $i\in\{1,\dots, t\}$,  $r_i -1$ vertices of $K_{r_i}$.
		Thus,  for every  LD-code $S$ of $G$,  $S\subset N(u)$.
	
\end{itemize}

In order to prove that these are the only graphs not containing any global LD-code, we previously need to show the following lemmas.

\begin{lemma} \label{l0}
Let $G=(V,E)$ be a block-cactus and $S\subseteq V$ a non-global LD-set of $G$.
If $u\in V\setminus S$ dominates $S$, then, for every component $H$ of $G[N(u)]$ of cardinality $r$, $|V(H)\cap S)|=\max\{1,r-1\}$.
\end{lemma}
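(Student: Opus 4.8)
The plan is to combine the clique decomposition of $G[N(u)]$ furnished by Lemma \ref{lema1} with the defining properties of an LD-set, establishing the two inequalities $|V(H)\cap S|\ge\max\{1,r-1\}$ and $|V(H)\cap S|\le\max\{1,r-1\}$ separately. First I would record the setup: since $u$ dominates $S$ we have $S\subseteq N(u)$, so $S$ distributes among the connected components of $G[N(u)]$, each of which is a clique by Lemma \ref{lema1}. Fix such a component $H$ with $|V(H)|=r$.

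For the lower bound I would compute, for an arbitrary vertex $v\in V(H)\setminus S$, the trace $N(v)\cap S$. Because $S\subseteq N(u)$, any neighbour of $v$ lying in $S$ is adjacent to $v$ inside $N(u)$ and hence lies in the same component $H$; conversely every vertex of $V(H)\cap S$ is adjacent to $v$ since $H$ is a clique. Thus $N(v)\cap S=V(H)\cap S$, a value independent of the choice of $v$. As $S$ is an LD-set, two distinct vertices outside $S$ cannot share a trace, so $H$ contains at most one vertex outside $S$, giving $|V(H)\cap S|\ge r-1$. The case $r=1$ is instead settled by domination: the unique vertex of $H$ is isolated in $G[N(u)]$, so if it lay outside $S$ its trace would be empty, contradicting that $S$ dominates $G$; hence it belongs to $S$ and $|V(H)\cap S|=1=\max\{1,0\}$.

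The reverse inequality for $r\ge 2$ — that at least one vertex of $H$ must be omitted from $S$ — is the step I expect to be the genuine obstacle, because for a clique it is \emph{not} forced by the LD-condition alone: all vertices of $H$ lying in $S$ produces no repeated trace, and $u$ is in any case the only vertex whose trace equals all of $S$. To obtain it I would invoke the structural description of Proposition \ref{propblocks}, which identifies $H$ as the clique $K_r$ of a block of type (a) carrying optional degree-one neighbours. Assuming $V(H)\subseteq S$, the plan is to derive a contradiction by exhibiting either a strictly smaller LD-set (so that $S$ is not of minimum size in the situations where the lemma is applied) or a second vertex of $V\setminus S$ dominating $S$, the latter being ruled out by Proposition \ref{pro.union}. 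This is precisely where the block-cactus hypothesis is indispensable, and where the purely complete configuration must be treated with particular care, since there $u$ may be a clique vertex with $N(u)$ forming a single clique and the omission is dictated by the surrounding pendant or corner structure rather than by the clique itself.
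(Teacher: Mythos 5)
The half of your argument that can be completed --- the lower bound --- is in fact the whole of the paper's proof: the paper disposes of this lemma in a single sentence, calling it an immediate consequence of Lemma~\ref{lema1} together with the LD-property of $S$, and the content behind that sentence is exactly your trace computation. Since $S\subseteq N(u)$, every $v\in V(H)\setminus S$ satisfies $N(v)\cap S=V(H)\cap S$ (a neighbour of $v$ lying in $S$ lies in $N(u)$, hence in $v$'s component $H$; conversely $H$ is a clique), so two distinct vertices of $H$ outside $S$ would have equal traces, and a singleton component outside $S$ would have empty trace, contradicting separation and domination respectively. This yields $|V(H)\cap S|\ge\max\{1,r-1\}$, and that inequality (mostly its $r=1$ case) is the only form in which the lemma is ever invoked afterwards, in Lemmas~\ref{l1}, \ref{l2}, \ref{l3} and in the proof of Theorem~\ref{pro.noglobal}.

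Where you part ways with the paper is the upper bound, and there your suspicion should be upgraded to a refutation: the claim that some vertex of each component with $r\ge 2$ must be omitted from $S$ is false, so the repair you sketch (minimality of $S$, Proposition~\ref{propblocks}, a second dominating vertex excluded by Proposition~\ref{pro.union}) cannot succeed. Take $G=K_{r+1}$ with $r\ge 2$, a block-cactus (for $r\ge 3$ it is graph (c) of Theorem~\ref{pro.noglobal}), and $S=V\setminus\{u\}$. Then $S$ is a non-global LD-\emph{code} dominated by $u$, yet $G[N(u)]$ is a single clique $H=K_r$ with $|V(H)\cap S|=r\neq r-1$. Minimality gives no contradiction ($S$ has minimum cardinality), and no second dominating vertex exists ($u$ is the only vertex outside $S$), so neither of your intended contradictions materializes. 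The same violation occurs inside the paper's own analysis of Figure~\ref{fig.NoGlobalLDset}(a): the LD-code consisting of $u$ and all but one vertex $w$ of $K_r$ is dominated by $w$, and every vertex of the unique component of $G[N(w)]$ belongs to the code. In short, the lemma is correct only as the inequality $|V(H)\cap S|\ge\max\{1,r-1\}$; the paper's one-line proof establishes no more than that, the later applications need no more than that, and your first two paragraphs prove precisely that --- your third paragraph should be replaced by a counterexample, not a proof.
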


\begin{proof}
This result is an immediate consequence of Lemma \ref{lema1} ( $G[N(u)]$ is a disjoint union of cliques), along with the fact that $S$ is an LD-set.
\end{proof}

Given a cut vertex $u$ of a connected graph $G$, let  $\Lambda_u$ be the set of all maximal connected subgraphs $H$ of $G$ such that 
(i)  $u\in V(H)$ and (ii) $u$ is not a cut vertex of $H$.
Observe that any subgraph of  $\Lambda_u$ can be obtained from a certain component of the graph $G-u$, by adding the vertex $u$ according to the structure of $G$.

\begin{lemma}\label{l1}
Let $G=(V,E)$ be a block-cactus with $\lambda(G)\ge3$ and let $S\subseteq V$ be a non-global LD-set of $G$.
If $u\in V\setminus S$ dominates $S$ and the set  $\Lambda_u$ contains a graph  isomorphic to one of the graphs displayed in Figure \ref{fig.nueva}, then  $G$ has  a global  LD-code.
\end{lemma}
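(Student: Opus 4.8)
The plan is to produce, under the stated hypothesis, a \emph{global} LD-code of $G$, that is, an LD-code $S^{*}$ of $G$ that is at the same time an LD-set of $\overline{G}$; by Proposition \ref{pro.lambdaglobal} the existence of a single such code already yields the conclusion. By Proposition \ref{pro.vertexdom} this reduces to exhibiting an LD-code $S^{*}$ with $|S^{*}|=\lambda(G)$ for which \emph{no} vertex of $V\setminus S^{*}$ dominates $S^{*}$. I would keep the dominating vertex $u$ of the given non-global LD-set $S$ and work with the description of $G$ supplied by Proposition \ref{propblocks}: $G$ is assembled by identifying $u$ across the finitely many pieces of $\Lambda_u$, each being a clique with optional pendants on its non-$u$ vertices, a $4$-cycle with optional pendants on the neighbours of $u$, or a $5$-cycle.

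First I would record how an LD-code decomposes along the cut vertex $u$. Fixing the convention $u\notin S^{*}$ (harmless, since $S\subseteq N(u)$ already forces $u\notin S$), each far vertex $w\in W=V\setminus N[u]$ has all its neighbours inside its own block, so both the location and the domination of $w$ are decided entirely within the piece of $\Lambda_u$ containing $w$. Consequently the LD-condition is essentially local to each piece, and using Lemma \ref{l0} (each clique-component of $G[N(u)]$ of order $r$ contributes exactly $\max\{1,r-1\}$ code vertices) together with Lemmas \ref{lema1} and \ref{lema2} I can determine, piece by piece, the minimum number of code vertices a piece must contribute and which local choices realise it; summing these local minima gives $\lambda(G)$.

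The crux of the construction is the contrast between the good pieces of Figure \ref{fig.nueva} and the pieces occurring in Theorem \ref{pro.noglobal}: for the latter every minimum local configuration is forced into $N(u)$, whereas for each good piece $H$ the local minimum is also attained by a configuration placing a code vertex $w\in W$ --- a pendant hanging from a neighbour of $u$, or an antipodal vertex of a $4$- or $5$-cycle. I would therefore build $S^{*}$ by choosing a minimum local configuration in every piece and, in the distinguished good piece, the configuration using such a far vertex $w$; the finitely many graphs of Figure \ref{fig.nueva} are treated one at a time to produce $w$ explicitly. The resulting $S^{*}$ has $|S^{*}|=\lambda(G)$ and is an LD-set of $G$ because distinct pieces meet only at $u$, so their far vertices are trivially separated. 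Since $w\in S^{*}\setminus N(u)$, the vertex $u$ does not dominate $S^{*}$, and to finish I would rule out any other dominator: a vertex $v\neq u$ with $S^{*}\subseteq N(v)$ would have to be adjacent both to $w$ and to code vertices lying in other blocks, which in a block-cactus forces $v$ to lie in two distinct blocks and hence $v=u$, a contradiction (the same block-geometry as in Lemma \ref{lema2}). Thus $S^{*}$ is a global LD-code.

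I expect two points to carry the real weight, both being local-to-global bookkeeping. The first is verifying that routing the code through a far vertex $w$ in the good piece does not raise the cardinality, i.e.\ that $S^{*}$ stays an LD-\emph{code} and not merely an LD-set; this is exactly the property distinguishing the graphs of Figure \ref{fig.nueva} from those of Theorem \ref{pro.noglobal}, and must be checked case by case via Lemma \ref{l0}. The second is the exclusion of a newly created dominating vertex after the modification, where the hypothesis $\lambda(G)\ge3$ and the cactus structure (two vertices sharing a neighbour outside $N[u]$ must lie on a common $C_4$ or $C_5$) are what make the argument go through.
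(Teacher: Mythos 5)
Your guiding idea is the right one and is the same insight that drives the paper's proof: each graph of Figure \ref{fig.nueva} supplies a vertex $z$ outside $N[u]$ through which a code can be routed, thereby destroying the dominating vertex $u$ at no cost in cardinality. But your implementation diverges from the paper's in a way that creates a genuine gap. The paper never rebuilds a code: it keeps the given code $S$ and performs a single exchange, $S'=(S\setminus\{v\})\cup\{z\}$, where Lemma \ref{l0} forces the designated neighbour $v$ of $u$ to lie in $S$; then $|S'|=|S|=\lambda(G)$ is automatic and only the (local) LD-property of $S'$ and its globality need checking. You instead construct $S^{*}$ from scratch and certify $|S^{*}|=\lambda(G)$ by the claim that $\lambda(G)$ equals the sum of per-piece local minima, with independent local choices gluing to an LD-code. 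That additivity-and-gluing claim is asserted, not proved, and it is not routine bookkeeping: the pieces of $\Lambda_u$ are coupled through $u$ (which must itself be dominated and separated), and whether a piece attains its local minimum with a ``far'' configuration is exactly the dichotomy the lemma is about. You acknowledge this must be ``checked case by case'' but defer it, so the plan postpones precisely the step that carries the lemma, while a much lighter route (the exchange) was available.

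A second, concrete flaw is your exclusion of a new dominating vertex. From ``$v$ would have to lie in two distinct blocks'' you conclude $v=u$; this is false as stated, since cut vertices of a block-cactus do lie in several blocks. The correct argument is that distinct elements of $\Lambda_u$ meet only in $u$, so every vertex other than $u$ has all its neighbours inside a single element of $\Lambda_u$ and hence cannot dominate a set meeting two pieces; the case $|\Lambda_u|=1$ then needs separate treatment. That this danger is real, and that your per-piece recipe cannot be applied blindly, is shown by graph (a) of Theorem \ref{pro.noglobal}, $G\cong K_1\vee(K_1+K_r)$, i.e.\ $K_{r+1}$ with one pendant vertex: taking as dominating vertex the clique vertex missing from the code, its unique piece is a clique with one pendant, which superficially admits a ``far'' configuration through the pendant, yet every minimum code routed through that pendant is dominated by the pendant's support vertex, and indeed $G$ has no global LD-code. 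Distinguishing the graphs of Figure \ref{fig.nueva} from such look-alike pieces is the substance of the lemma, and your proposal leaves it unestablished.
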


\begin{figure}[hb]
\begin{center}
\includegraphics[height=2.7cm]{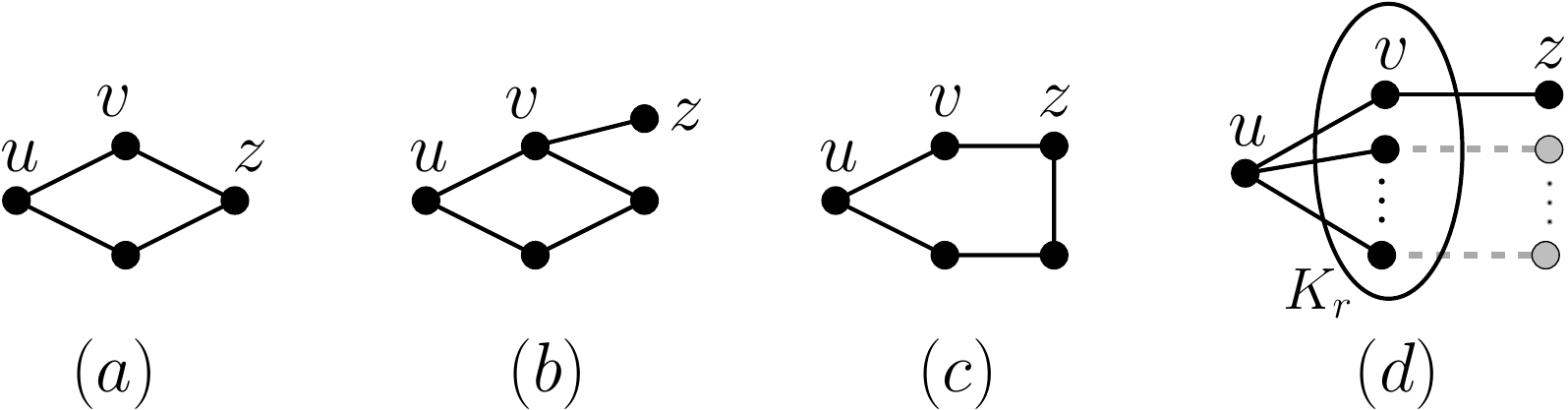}
\caption{Some possible elements of $\Lambda_u$.}
\label{fig.nueva}
\end{center}
\end{figure}

\begin{proof}
Let $v,z$ the pair of vertices shown in  Figure \ref{fig.nueva}.
Then, according to Lemma \ref{l0}, $v\in S$ and  $S'=(S\setminus\{v\}) \cup \{z\}$ is an  LD-set de $G$ having the same cardinality as $S$. Hence, $S'$ is a global  LD-code of $G$.
\end{proof}

\begin{lemma}\label{l2}
Let $G=(V,E)$ be a block-cactus with $\lambda(G)\ge3$ and let $S\subseteq V$ be a non-global LD-set of $G$.
If $u\in V\setminus S$ dominates $S$ and the set $\Lambda_u$ contains a pair of graphs $H_1$ and $H_2$ such that 
$H_1,H_2\in\{P_2,P_3\}$, then   $G$ has  a global  LD-code.
\end{lemma}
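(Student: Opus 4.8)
The plan is to manufacture, from the given non-global LD-code $S$, a new set $S'$ of the same cardinality that is an LD-set of $G$ which is dominated by no vertex of $G$; by Proposition~\ref{pro.vertexdom} such an $S'$ is automatically an LD-set of $\overline{G}$ as well, hence a global LD-code. First I would fix notation for the two petals: write $H_1,H_2$ for the members of $\Lambda_u$ lying in $\{P_2,P_3\}$, let $v_i$ be the neighbour of $u$ in $H_i$, and, when $H_i\cong P_3$, let $z_i$ be the leaf hanging from $v_i$. Since $\{v_1\}$ and $\{v_2\}$ are singleton components of $G[N(u)]$, Lemma~\ref{l0} gives $v_1,v_2\in S$; moreover each $z_i\notin S$ with $N(z_i)=\{v_i\}$.

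I would then split into two cases according to the types of the petals. If at least one petal, say $H_2$, is a $P_2$ (so $v_2$ is a leaf with $N(v_2)=\{u\}$), I set $S'=(S\setminus\{v_2\})\cup\{u\}$. If instead neither petal is a $P_2$, i.e. $H_1\cong H_2\cong P_3$, I perform inside $H_1$ the swap $S'=(S\setminus\{v_1\})\cup\{z_1\}$, which is precisely the move used in the proof of Lemma~\ref{l1}. In both cases $|S'|=|S|$ is immediate.

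For each choice I must verify that $S'$ is an LD-set of $G$, and the locating property is the part that needs care: it is the main obstacle. I would track exactly which vertices change the set $N(\cdot)\cap S'$. Domination is routine — in the first case $v_2$ is now dominated by $u\in S'$ and, being a leaf, it dominated no one; in the second case $z_1$ passes into $S'$ and $v_1$ becomes dominated by $z_1$. For location, the structural facts that make everything go through are that $v_2$ (resp. $z_1$) has $u$ (resp. $v_1$) as its only neighbour, so deleting $v_2$ (resp. inserting $z_1$) alters no other vertex's set $N(\cdot)\cap S'$, while inserting $u$ augments every such set of a neighbour of $u$ by the single new element $u$ and so preserves distinctness. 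The only genuinely new value to control is $\{u\}$ for $v_2$ in the first case: a competing $w$ with $N(w)\cap S'=\{u\}$ would force $N(w)\cap S=\{v_2\}$, impossible since $v_2$ has no neighbour but $u$. In the second case the new value to control is $S\setminus\{v_1\}$ acquired by $u$: a competitor $w$ would have to be adjacent to $v_2\in S\setminus\{v_1\}$, hence $w\in\{u,z_2\}$, and $w=z_2$ is excluded because $N(z_2)\cap S=\{v_2\}$ has size $1<\lambda(G)-1$. This last step is exactly where the hypothesis $\lambda(G)\ge 3$ and the presence of the \emph{second} petal are used.

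Finally, for globality I would check that no vertex dominates $S'$. In the first case a dominator would have to be adjacent both to $u\in S'$ (forcing it into $N(u)$) and to the blocker $v_1\in S'$, whose only neighbours are $u$ and possibly $z_1\notin N(u)$; this forces the dominator to be $u$ itself, which is excluded. In the second case a dominator would have to be adjacent to $z_1\in S'$, hence equal to $v_1$, but $v_1$ is not adjacent to $v_2\in S'$. Either way $S'$ is dominated by no vertex, so by Proposition~\ref{pro.vertexdom} it is a global LD-code of $G$. I expect the bookkeeping in the locating verification — confirming that the handful of sets $N(\cdot)\cap S'$ that actually change remain pairwise distinct and collide with nothing — to be the only delicate point; everything else is structural.
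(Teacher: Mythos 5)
Your proof is correct and takes essentially the same approach as the paper's: both rely on Lemma~\ref{l0} to force $v_1,v_2\in S$ and then perform one of the same two swaps --- replace a $P_2$-leaf of $\Lambda_u$ by $u$, or replace a $P_3$-centre by its pendant leaf --- producing an equal-cardinality LD-set that no vertex dominates, whence it is global. The only differences are cosmetic: your case split mirrors the paper's (the paper applies the $P_3$-swap whenever some petal is a $P_3$ and reserves the $P_2$-swap for the case of two $P_2$'s, while you prioritize the $P_2$-swap), and you spell out the locating and non-domination verifications that the paper merely asserts.
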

\begin{proof}
If $H_1$ is isomorphic to $P_3$,  with $V(H_1)=\{u,v,z\} $ and $E(H_1)=\{uv,vz\}$, then, according to Lemma \ref{l0}, $v\in S$ and  $S'=(S\setminus\{v\}) \cup \{z\}$ is an  LD-set de $G$ having the same cardinality as $S$.
Hence, $S'$ is a global  LD-code of $G$.

If both $H_1$ and $H_2$ are isomorphic to $P_2$,  and  $V(H_1)=\{u,t\} $ and $E(H_1)=\{ut\}$, then, according to Lemma \ref{l0}, $v\in S$ and  $S'=(S\setminus\{t\}) \cup \{u\}$ is an  LD-set de $G$ having the same cardinality as $S$.
Hence, $S'$ is a global  LD-code of $G$.
\end{proof}

\begin{lemma}\label{l3}
Let $G=(V,E)$ be a block-cactus and $S\subseteq V$ a non-global LD-set of $G$ whose dominating vertex is $u$.
If $\Lambda_u$ contains three graphs $H_1$, $H_2$ and $H_3$ such that  $H_1\in\{P_2,P_3\}$ and $H_2,H_3\in\{K_r,\textsf{L}\}$, where $\textsf{L}$ denotes the corner graph displayed in Figure \ref{fig.somegrafs}, then  $G$ has  a global  LD-code.
\end{lemma}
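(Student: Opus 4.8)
The plan is to mimic the swap arguments of Lemmas \ref{l1} and \ref{l2}: starting from the non-global LD-set $S$ with dominating vertex $u$ (so $S\subseteq N(u)$), I would produce a set $S'$ of the same cardinality that is still an LD-set of $G$ but such that no vertex of $V\setminus S'$ dominates $S'$, hence a global LD-code. Lemma \ref{l0} locates $S$ inside each branch of $\Lambda_u$: in the small branch $H_1$ the unique neighbour of $u$ lies in $S$, and each of $H_2,H_3$ contributes at least one vertex of $S$ lying in $H_i\setminus\{u\}$. The guiding principle is that a set fails to be global exactly when $u$ dominates it, so it is enough to perturb $S$ so that either some vertex of $S'$ falls outside $N(u)$ or $u$ itself enters $S'$, while controlling that no new dominating vertex is created.

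I would then split according to $H_1$. If $H_1\cong P_3$, say $V(H_1)=\{u,v,z\}$ with $z$ the pendant at distance $2$ from $u$, then $v\in S$ by Lemma \ref{l0} and the swap $S'=(S\setminus\{v\})\cup\{z\}$ already works exactly as in the $P_3$ case of Lemma \ref{l2}, using only $H_1$: since $z\notin N(u)$ the vertex $u$ no longer dominates $S'$, and $z$ has no other neighbour. The substantive case is $H_1\cong P_2$, i.e.\ $t$ is a pendant of $u$ with $t\in S$ and $N(t)=\{u\}$; here there is no room to escape $N(u)$ inside $H_1$, so instead I would set $S'=(S\setminus\{t\})\cup\{u\}$. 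That $S'$ is an LD-set is routine: $t$ is now dominated by $u\in S'$, and no other vertex loses domination because $t$ dominated only $u$; moreover $S'$ is still locating, since inserting $u$ merely appends $u$ to the $S$-traces of the neighbours of $u$, preserving all distinctions, while $t$ acquires the trace $\{u\}$, which no dominated vertex can share without contradicting that $S$ dominates $G$.

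The heart of the argument — and the reason both $H_2$ and $H_3$ are needed — is verifying that $S'$ is global. As $u\in S'$, the vertex $u$ is no longer a candidate dominator. For any other $w\in V\setminus S'$, note that in a block-cactus the branches of $\Lambda_u$ meet only at $u$, so $w$ lies in a single branch; since $S'$ still contains a vertex of $H_2\setminus\{u\}$ and a vertex of $H_3\setminus\{u\}$, the vertex $w$ is non-adjacent to the $S'$-vertex of whichever of $H_2,H_3$ it does not belong to, and therefore cannot dominate $S'$. This is precisely where two clique/corner branches are indispensable: with a single such branch $H_2$, the vertex of $H_2$ omitted from $S$ (for a clique $K_r$, the unique clique-vertex not taken into $S$) is adjacent to $u$ and to every other vertex of $S'$ lying in $H_2$, and would therefore dominate $S'$, destroying globality. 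I expect this uniform ``confinement to one branch'' observation to be the main point to get right, since it must handle both $H_i\cong K_r$ and $H_i\cong\textsf{L}$ without invoking the internal structure of the corner. Concluding, $S'$ is a global LD-set with $|S'|=|S|$, that is, a global LD-code of $G$.
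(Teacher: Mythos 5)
Your proposal is correct and takes essentially the same approach as the paper: the paper's proof of Lemma~\ref{l3} performs exactly your two swaps --- $S'=(S\setminus\{t\})\cup\{u\}$ when $H_1\cong P_2$ and $S'=(S\setminus\{v\})\cup\{z\}$ when $H_1\cong P_3$ --- invoking Lemma~\ref{l0}, and leaves implicit the LD and globality verifications that you spell out via the confinement-to-one-branch argument. The only slight imprecision is your remark that the $P_3$ case uses ``only $H_1$'': even there, ruling out a trace collision between $u$ and a vertex adjacent to all of $S\setminus\{v\}$ (e.g.\ the unused vertex of a single clique branch) and ruling out $v$ as a dominator of $S'$ both require $S'$ to meet two branches besides $H_1$, which is precisely what your general two-branch argument supplies.
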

\begin{proof}
If $H_1$ is isomorphic to $P_2$,  with $V(H_1)=\{u,t\} $ and $E(H_1)=\{ut\}$, then, according to Lemma \ref{l0}, $v\in S$ and  $S'=(S\setminus\{t\}) \cup \{u\}$ is an  LD-set de $G$ having the same cardinality as $S$.
Hence, $S'$ is a global  LD-code of $G$.

If $H_1$ is isomorphic to $P_2$,  $V(H_1)=\{u,v,z\} $ and $E(H_1)=\{uv,vz\}$, then,  according to Lemma \ref{l0}, $v\in S$ and  $S'=(S\setminus\{v\}) \cup \{z\}$ is an  LD-set de $G$ having the same cardinality as $S$.
Hence, $S'$ is a global  LD-code of $G$.
\end{proof}

We are now ready to end the proof of the Theorem \ref{pro.noglobal}.
Suppose that $G$ is a block-cactus such that every LD-code of $G$ is non-global.
Let $S\subseteq V$ be an LD-code of $G$ and let $u\in V\setminus S$ be a vertex dominating  $S$.
Notice that, according to Proposition \ref{propblocks}, every graph of $\Lambda_u$ is isomorphic to one of the graphs displayed in Figure \ref{fig.blocs}.
Moreover, having into account the results obtained in Lemma \ref{l1}, Lemma \ref{l2} and  Lemma \ref{l3}, the set $\Lambda_u$ is one the following sets:

 \begin{itemize}

 \item $\{P_2,K_r\}$. In this case,  $G$ is the graph shown in Figure \ref{fig.NoGlobalLDset}(a).
	
 \item $\{P_3,K_r\}$. In this case,  $G$ is the graph shown in Figure \ref{fig.NoGlobalLDset}(b).
	
 \item $\{P_2,\textsf{L}\}$.  Let  $u,t$ be the vertices of $P_2$. Then, according to Lemma \ref{l0},  $t\in S$, and  $S'=(S\setminus\{t\}) \cup \{u\}$ is a global  LD-code of $G$.

 \item $\{P_3,\textsf{L}\}$. In this case,  $G$ is the graph shown in Figure \ref{fig.NoGlobalLDset}(d).

 \item $\{K_r\}$. In this case,  $G$ is the graph shown in Figure \ref{fig.NoGlobalLDset}(c).

 \item A set of cardinality at least two, being every graph isomorphic either to a clique or to a corner. In this case,  $G$ is a graph as  shown in Figure \ref{fig.NoGlobalLDset}(e).

 \end{itemize}
This completes the proof of Theorem \ref{pro.noglobal}.
\end{proof}

As an immediate consequence of  Propositions \ref{pro.lambdaglobal} and \ref{pro.lambda2} and Theorem \ref{pro.noglobal}, the following corollaries are obtained.

\begin{corollary}
A block-cactus $G$ satisfies $\lambda_g(G) = \lambda (G) +1$ if and only if $G$ is isomorphic either to one of the graphs described in Figure \ref{fig.NoGlobalLDset} 
or it belongs to the set $\{P_2,P_5,C_3,C_5,\overline{P},{\rm paw,bull, butterfly}\}$.
\end{corollary}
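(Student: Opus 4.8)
The plan is to convert the equality $\lambda_g(G)=\lambda(G)+1$ into a statement about LD-codes and then dispatch the cases according to the value of $\lambda(G)$. By Proposition~\ref{pro.lambdaglobal}, for any graph $\lambda_g(G)=\lambda(G)+1$ holds if and only if \emph{every} LD-code of $G$ is non-global. Hence the corollary amounts to listing exactly those block-cactus all of whose LD-codes are non-global, and I would organize the argument by whether $\lambda(G)$ equals $1$, $2$, or at least $3$.

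The range $\lambda(G)\ge 3$ is handled directly by Theorem~\ref{pro.noglobal}: a block-cactus with $\lambda(G)\ge 3$ has all its LD-codes non-global precisely when it is one of the graphs of Figure~\ref{fig.NoGlobalLDset}. Since each of those graphs does satisfy $\lambda(G)\ge 3$, combining this with the reduction above shows that the block-cactus in this range contributing to the list are exactly the ones drawn in Figure~\ref{fig.NoGlobalLDset}; no extra work is needed here.

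It remains to treat the finitely many block-cactus with $\lambda(G)\le 2$. For $\lambda(G)=1$ the only connected graphs are $K_1$ and $K_2=P_2$: the single-vertex LD-code of $K_1$ is an LD-set of $\overline{K_1}=K_1$ and hence global, so $K_1$ is discarded, whereas the LD-code of $P_2$ is dominated by the remaining vertex and is therefore non-global, so $P_2$ enters the list. For $\lambda(G)=2$ I would invoke the known classification of all block-cactus with $\lambda(G)=2$ (Figure~\ref{fig.lambda2}) and test each one. Here Proposition~\ref{pro.lambda2} supplies part of the answer: if $\lambda(\overline G)=\lambda(G)+1$, then no LD-code can be global, because a global LD-code would be an LD-set of $\overline G$ of size $\lambda(G)$, forcing $\lambda(\overline G)\le\lambda(G)$; thus the four graphs $C_3$, the paw, the butterfly and $\overline P$ all qualify. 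By Theorem~\ref{cor.difuno} and Proposition~\ref{pro.lambda2} the remaining graphs of Figure~\ref{fig.lambda2} satisfy $\lambda(\overline G)=\lambda(G)=2$, and a direct inspection of their (very few) LD-codes shows that precisely three more of them, namely $P_5$, $C_5$ and the bull, have all their LD-codes non-global, while every other one admits a global LD-code. Together with $P_2$ this produces the set $\{P_2,P_5,C_3,C_5,\overline P,\mathrm{paw},\mathrm{bull},\mathrm{butterfly}\}$, and the union with Figure~\ref{fig.NoGlobalLDset} completes the characterization.

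The main obstacle is exactly this last point: the condition $\lambda_g(G)=\lambda(G)+1$ is strictly weaker than $\lambda(\overline G)=\lambda(G)+1$, so Proposition~\ref{pro.lambda2} by itself does not detect all the relevant $\lambda=2$ graphs. The graphs $P_5$, $C_5$ and the bull have $\lambda(\overline G)=\lambda(G)=2$ yet $\lambda_g=3$ (each has, up to symmetry, an essentially unique LD-code, which is dominated by a vertex outside it), and they must be recovered by hand from the finite list of Figure~\ref{fig.lambda2} rather than read off from the cited propositions. Everything else is routine bookkeeping: confirming that each listed graph indeed belongs to the block-cactus family and that no further block-cactus with $\lambda=2$ survives the ``all LD-codes non-global'' test.
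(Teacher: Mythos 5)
Your proposal is correct and follows essentially the same route the paper intends: Proposition~\ref{pro.lambdaglobal} converts the equality $\lambda_g(G)=\lambda(G)+1$ into ``every LD-code of $G$ is non-global'', Theorem~\ref{pro.noglobal} settles the range $\lambda(G)\ge 3$, and the block-cactus with $\lambda(G)\le 2$ are dispatched by a finite check against Figure~\ref{fig.lambda2}. The paper itself gives no written proof (it presents the corollary as an ``immediate consequence'' of Propositions~\ref{pro.lambdaglobal}, \ref{pro.lambda2} and Theorem~\ref{pro.noglobal}), so your explicit remark that Proposition~\ref{pro.lambda2} alone cannot yield $P_5$, $C_5$ and the bull --- since these satisfy $\lambda(\overline{G})=\lambda(G)=2$ yet have all LD-codes non-global --- and that they must be recovered by direct inspection of the $\lambda=2$ list is exactly the bookkeeping the paper leaves implicit, done correctly.
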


\begin{corollary}
  Every tree $T$ other than $P_2$ and $P_5$ satisfies $\lambda (T)=\lambda_g (T)$.
\end{corollary}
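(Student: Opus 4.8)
The plan is to obtain this statement as a direct consequence of the preceding corollary characterizing the block-cactus $G$ with $\lambda_g(G)=\lambda(G)+1$, since every tree is a block-cactus: each of its blocks is a copy of $K_2$, a complete graph of order $2$. By Proposition~\ref{pro.ldglobal} one always has $\lambda(T)\le\lambda_g(T)\le\lambda(T)+1$, so it suffices to prove that every tree $T\notin\{P_2,P_5\}$ satisfies $\lambda_g(T)\ne\lambda(T)+1$; equivalently, by Proposition~\ref{pro.lambdaglobal}, that such a $T$ possesses at least one global LD-code.

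The argument then reduces to a finite check. By the preceding corollary, a block-cactus $G$ fulfils $\lambda_g(G)=\lambda(G)+1$ exactly when $G$ is one of the graphs drawn in Figure~\ref{fig.NoGlobalLDset} or lies in $\{P_2,P_5,C_3,C_5,\overline{P},\mathrm{paw},\mathrm{bull},\mathrm{butterfly}\}$. I would therefore determine which graphs of this list are acyclic. Each graph in Figure~\ref{fig.NoGlobalLDset} contains a cycle: cases (b) and (c) contain a complete subgraph $K_r$ with $r\ge3$; cases (a) and (e) contain a join $K_1\vee(\cdots)$ that produces a triangle (already $K_1\vee K_2=K_3$); and case (d) contains a corner, which has vertices of degree $2$ and hence a cycle. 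Among the remaining graphs, $C_3$ and $C_5$ are cycles, while the paw, the bull, the butterfly and $\overline{P}$ each contain a triangle—for $\overline{P}$ it is enough to note that the complement of the banner has five edges on five vertices and so cannot be a forest. Thus $P_2$ and $P_5$ are the only trees appearing in the whole list.

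Combining the two observations finishes the proof: if $T$ is a tree other than $P_2$ and $P_5$, it is not one of the block-cactus with $\lambda_g=\lambda+1$, so $\lambda_g(T)\ne\lambda(T)+1$, and then $\lambda(T)\le\lambda_g(T)\le\lambda(T)+1$ forces $\lambda_g(T)=\lambda(T)$. The only nonroutine part is the bookkeeping carried out in the second paragraph—verifying that every graph of Figure~\ref{fig.NoGlobalLDset} and each of $C_3,C_5,\overline{P},\mathrm{paw},\mathrm{bull},\mathrm{butterfly}$ contains a cycle—which is immediate once the corner and the banner are drawn and is really the only place demanding care.
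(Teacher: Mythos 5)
Your proposal is correct and takes essentially the same route as the paper, which states this corollary as an immediate consequence of the preceding characterization of the block-cactus satisfying $\lambda_g(G)=\lambda(G)+1$: every tree is a block-cactus, and the only acyclic graphs in that characterization are $P_2$ and $P_5$. One caveat on your bookkeeping: the corner contains a cycle because it is built on a $C_4$ (see Figure \ref{fig.somegrafs} and Proposition \ref{propblocks}), not because it ``has vertices of degree $2$'' (paths have those too), and this same fact is what handles case (e) when $t=0$, where the graph consists of corners only and your join-triangle argument does not apply.
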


\begin{corollary}
  Every unicyclic graph  $G$ different from the one displayed in Figure \ref{fig.NoGlobalLDset}(d) and not belonging  
	to the set $\{C_3,C_5,\overline{P},{\rm paw,bull}\}$ satisfies $\lambda (G)=\lambda_g (G)$.
\end{corollary}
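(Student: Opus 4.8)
The plan is to reduce the statement to the already-established classification of block-cactus graphs with $\lambda_g(G)=\lambda(G)+1$ and then to intersect that list with the class of unicyclic graphs. First I would observe that every unicyclic graph is a block-cactus: its unique cycle is one block and every remaining block is a bridge, that is, a copy of $K_2$. Hence the above characterization of block-cactus graphs satisfying $\lambda_g(G)=\lambda(G)+1$ applies verbatim. Combining it with Proposition \ref{pro.ldglobal}, which forces $\lambda(G)\le\lambda_g(G)\le\lambda(G)+1$, I get that a unicyclic graph $G$ fails to satisfy $\lambda(G)=\lambda_g(G)$ precisely when $G$ is isomorphic to one of the graphs in Figure \ref{fig.NoGlobalLDset} or belongs to $\{P_2,P_5,C_3,C_5,\overline{P},\mathrm{paw},\mathrm{bull},\mathrm{butterfly}\}$.

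The core of the argument is then a finite case check deciding which of these graphs are unicyclic. For the families in Figure \ref{fig.NoGlobalLDset}, I would argue that (a), (b) and (c) all contain a complete graph $K_{r+1}$ with $r\ge 3$, hence a copy of $K_4$, which already carries more than one cycle; thus none of them is unicyclic. Likewise, the graph in (e) is built by identifying the vertex $u$ of at least two pieces ($t+t'\ge 2$), each of which is either a clique $K_{r_i+1}$ with $r_i\ge 2$ or a corner $\textsf{L}$, and every such piece contains a cycle, so (e) has at least two cycles and is not unicyclic either. By contrast, the graph in (d) consists of a single corner $\textsf{L}$, whose only cycle is its defining cycle, with a pendant $K_2$ attached at a degree-$2$ vertex; attaching a bridge creates no new cycle, so (d) is unicyclic, which is exactly why it is singled out in the statement.

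For the finite set I would classify each graph directly from its structure. The graphs $P_2$ and $P_5$ are trees and hence acyclic, while the butterfly consists of two triangles sharing a single vertex and so contains two cycles; none of these three is unicyclic. The remaining graphs $C_3$, $C_5$, the paw (a triangle with one pendant), the bull (a triangle with two pendants) and $\overline{P}$ (the complement of the banner, which is a triangle with a path of length two attached at one of its vertices) each possess exactly one cycle and are therefore unicyclic. Putting the two checks together, the unicyclic graphs satisfying $\lambda_g(G)=\lambda(G)+1$ are exactly the graph of Figure \ref{fig.NoGlobalLDset}(d) together with $\{C_3,C_5,\overline{P},\mathrm{paw},\mathrm{bull}\}$, and every other unicyclic graph satisfies $\lambda(G)=\lambda_g(G)$, as claimed.

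I expect the main obstacle to be the bookkeeping of the case analysis rather than any conceptual difficulty. The essential point is to use the correct definitions of the paw, bull, banner (and its complement $\overline{P}$), butterfly and corner $\textsf{L}$ from Figure \ref{fig.somegrafs}, and in particular to verify that the clique-based graphs (a), (b), (c) and (e) genuinely fail to be unicyclic (for instance, that $K_4$ already contains several cycles, and that the butterfly carries two), so that no unicyclic graph is mistakenly included in, or excluded from, the exceptional list.
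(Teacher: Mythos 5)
Your proposal is correct and follows essentially the same route as the paper: the paper states this corollary as an immediate consequence of the preceding block-cactus characterization of $\lambda_g(G)=\lambda(G)+1$ (via Propositions \ref{pro.lambdaglobal} and \ref{pro.lambda2} and Theorem \ref{pro.noglobal}), and your argument simply makes explicit the two steps implicit there, namely that unicyclic graphs are block-cactus and that among the exceptional graphs only the one in Figure \ref{fig.NoGlobalLDset}(d), $C_3$, $C_5$, $\overline{P}$, the paw and the bull are unicyclic. Your case check (the $K_4$-containing families (a)--(c), the multi-cycle graphs in (e), the butterfly, and the trees $P_2$, $P_5$ being excluded) is accurate and matches the paper's intended reasoning.
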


If $G$ is a block-cactus of order at least 2, we have obtained the following characterization.

\begin{theorem}
   If $G=(V,E)$ is a block-cactus of order at least 2, then
   $\lambda (\overline{G}) =\lambda (G)  + 1$ if and only if $G$ is isomorphic to one of the following graphs (see Figure \ref{fig.block-cactusGuanyaComplementari}):
   \begin{itemize}
    \item[(a)] $K_1 \vee (K_1 + K_r)$, $r\ge 2$;
    \item[(b)] the graph obtained by joining one vertex of $K_2$ with a vertex of a complete graph of order $r+1$, $r\ge 2$;
    \item[(c)] $K_{r+1}$, $r\ge 1$;
    \item[(d)] $K_1 \vee (K_{r_1} + \dots + K_{r_t})$, $t\ge 2$, $r_1,\dots ,r_t\ge 2$.
    \end{itemize}
\end{theorem}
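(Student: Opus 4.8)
My plan is to read this theorem as a refinement of the preceding classification of block-cactus satisfying $\lambda_g(G)=\lambda(G)+1$, so the first thing I would do is set up the logical reduction. If $\lambda(\overline G)=\lambda(G)+1$, then since a global LD-set of $G$ is in particular an LD-set of $\overline G$ we get $\lambda_g(G)\ge\lambda(\overline G)=\lambda(G)+1$, while Proposition \ref{pro.ldglobal} gives $\lambda_g(G)\le\lambda(G)+1$; hence $\lambda_g(G)=\lambda(G)+1$, and $G$ already appears in the known list (the families of Figure \ref{fig.NoGlobalLDset} together with $\{P_2,P_5,C_3,C_5,\overline{P},\mathrm{paw},\mathrm{bull},\mathrm{butterfly}\}$). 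Conversely, Corollary \ref{cor.lambdaglobalmayor} shows that once $\lambda_g(G)=\lambda(G)+1$ one cannot have $\lambda(\overline G)=\lambda(G)-1$ (that would force $\lambda_g(G)=\lambda(G)$), so $\lambda(\overline G)\in\{\lambda(G),\lambda(G)+1\}$ and the desired equality holds exactly when $\lambda(\overline G)\neq\lambda(G)$. Thus the task becomes: run through the finitely many families of the known list and keep precisely those for which $\lambda(\overline G)\neq\lambda(G)$.

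For the families built solely from cliques I would identify the complement explicitly via $\overline{A\vee B}=\overline A+\overline B$ and $\overline{A+B}=\overline A\vee\overline B$, obtaining $\overline{K_1\vee(K_1+K_r)}=K_1+K_{1,r}$, $\overline{K_{r+1}}=(r+1)K_1$, and $\overline{K_1\vee(K_{r_1}+\dots+K_{r_t})}=K_1+K_{r_1,\dots,r_t}$. In each of these complements the members of every class of mutually non-adjacent twins (the leaves of the star, all vertices of the empty graph, each part of the complete multipartite graph) share their neighbourhood, so at most one vertex of each class may lie outside an LD-set; together with the forced membership of the isolated vertex, a direct count yields $\lambda(\overline G)=\lambda(G)+1$ in all three cases. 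For the clique-with-a-tail graph (b), whose complement is $K_{2,r}$ with one pendant vertex attached, the same twin-and-pendant bookkeeping shows that no LD-set of size $\lambda(G)=r$ exists, and Theorem \ref{cor.difuno} bounds $\lambda(\overline G)$ above by $r+1$, so $\lambda(\overline G)=r+1$. Hence all clique families survive, and merging the small survivors ($\mathrm{paw}$ as (a) with $r=2$, $\overline P$ as (b) with $r=2$, $C_3$ and $P_2$ as (c), the butterfly as (d) with $t=2$ and $r_1=r_2=2$) turns the ranges $r\ge3$ of Figure \ref{fig.NoGlobalLDset} into the ranges $r\ge2$, $r\ge2$, $r\ge1$, $t\ge2$ of the statement.

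For the remaining graphs of the list I would instead establish $\lambda(\overline G)=\lambda(G)$, which by the reduction is the same as exhibiting an LD-set of $\overline G$ of cardinality exactly $\lambda(G)$. The values for $P_5$ and $C_5$ are read off Table \ref{tab.valorspetits}, and the bull is settled by Proposition \ref{pro.lambda2}, being distinct from $C_3$, the paw, the butterfly and $\overline P$. The graphs still to be excluded are exactly those whose decomposition around the dominating vertex $u$ contains at least one corner $\textsf{L}$, namely family (d) of Figure \ref{fig.NoGlobalLDset} and the corner-bearing instances of family (e). For these I would build by hand a locating-dominating set of $\overline G$ of size $\lambda(G)$, exploiting the expectation that the complement of a corner admits a locating-dominating configuration one vertex cheaper than a clique of the same order; this single saving should absorb the ``$+1$'' contributed by the universal-type vertex and give $\lambda(\overline G)=\lambda(G)$.

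I expect the corner computation of the previous paragraph to be the main obstacle. The clique families are essentially automatic once the complement is named, since the twin-class counting simultaneously produces an optimal LD-set and certifies its optimality; but for the corner families one has to construct an explicit economical LD-set of $\overline G$ and check that it both locates and dominates, and---more delicately---that a single corner always suffices to pull $\lambda(\overline G)$ back down to $\lambda(G)$ regardless of how many accompanying cliques are present. Once this is verified, collecting the surviving families against the excluded ones establishes both implications of the theorem simultaneously.
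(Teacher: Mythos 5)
Your overall strategy coincides with the paper's: reduce to the classification of block-cactus all of whose LD-codes are non-global (you route this through Corollary \ref{cor.lambdaglobalmayor} and the corollary on $\lambda_g$, the paper routes it through Proposition \ref{pro.globalImplica}, Proposition \ref{pro.lambda2} and Theorem \ref{pro.noglobal}, which amounts to the same thing), and then decide, family by family, whether $\lambda(\overline{G})$ equals $\lambda(G)$ or $\lambda(G)+1$. That reduction is sound, your twin-counting for the clique-built families and their complements $K_1+K_{1,r}$, $(r+1)K_1$, $K_1+K_{r_1,\dots,r_t}$ and $K_{2,r}$-plus-pendant is essentially the paper's computation, and your dispatch of the sporadic small graphs ($P_5$ and $C_5$ via Table \ref{tab.valorspetits}, the bull via Proposition \ref{pro.lambda2}) together with the merging of the small cases into the ranges $r\ge 2$, $r\ge 2$, $r\ge 1$, $t\ge 2$ is correct.

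The gap is the step you yourself flag as the main obstacle: you never actually exclude the corner-containing families, namely the graph of Theorem \ref{pro.noglobal}(d) and the graphs of Theorem \ref{pro.noglobal}(e) with $t'\ge 1$ copies of the corner $\textsf{L}$. This is the only genuinely new content of the "only if" direction beyond the previously established classification, so leaving it as an expectation leaves the theorem unproved: without it, these families cannot be struck from the candidate list. Moreover, the heuristic you offer to guide the construction --- that the complement of a corner admits an LD-configuration "one vertex cheaper" than a clique of the same order, this saving absorbing the $+1$ contributed by the universal-type vertex --- misdescribes the actual mechanism. In the paper's argument the per-corner cost is unchanged: an LD-code of $G$ consists of all but one vertex of each clique together with the two degree-3 vertices of each corner, and an LD-set of $\overline{G}$ of the \emph{same} cardinality is obtained by swapping, in a single corner, one degree-3 vertex for the degree-2 vertex of that copy (the vertex at distance 2 from $u$). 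The point of the swap is not a saving inside the corner; it is that the new set is no longer contained in $N_G(u)$, and in $\overline{G}$ the vertex $u$ is dominated by the swapped-in degree-2 vertex, so no extra vertex must be spent on $u$. One then checks directly that the traces $N_{\overline{G}}(\cdot)\cap S'$ of all outside vertices are nonempty and pairwise distinct (note the swapped set is \emph{not} an LD-set of $G$ itself, since a pendant vertex of that corner loses its only dominator, so the verification must be done in $\overline{G}$). The analogous explicit size-3 codes of $G$ and $\overline{G}$ settle the graph of Theorem \ref{pro.noglobal}(d). Until these constructions are carried out, the characterization is incomplete.
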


\begin{figure}[htb]
\begin{center}
\includegraphics[height=3.0cm]{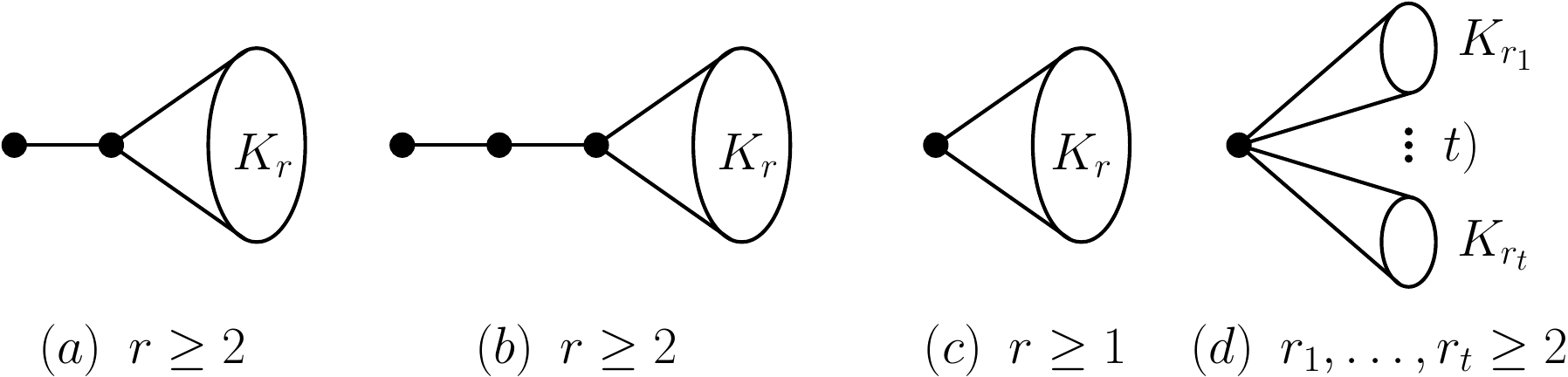}
\caption{Block-cactus satisfying $\lambda (\overline{G}) =\lambda (G)  + 1$.}
\label{fig.block-cactusGuanyaComplementari}
\end{center}
\end{figure}

\begin{proof}
  Let us see first that all graphs described above satisfy $\lambda (G)<\lambda (\overline{G})$. 
	Recall that if $W$ is a set of twin vertices of a graph $G$, then every LD-set must contain at least all but one of the vertices of $W$.
  Consider one of the  graphs described in (a), $G\cong K_1 \vee (K_1 + K_r)$, $r\ge 2$. 
	The complement of $G$ is the graph  $K_1+ K_{1,r}$. 
	It is easy to verify that $\lambda (G)=r<r+1=\lambda (\overline{G})$.
  If $G$ is one of the graphs described in b),  then $\lambda (G)=r<r+1=\lambda (\overline{G})$.
  Finally, if $G\cong K_1 \vee+ (K_{r_1} + \dots + K_{r_t})$ is a graph of order $n$, with $t\ge 1$ and $r_1,\dots ,r_t\ge 2$, then we have $\lambda (G)=n-t-1<n-t=\lambda (\overline{G})$.

Now, suppose that $G=(V,E)$ is a block-cactus of order at least 3 satisfying $\lambda (\overline{G}) =\lambda (G)  + 1$.

If $\lambda(G)=1$, as the order of G is at least 2, then $G$ is the 2-path $P_2$, which satisfies $2=\lambda(\overline P_2)=\lambda(P_2)+1$.
This case is described under (c)  when r=1 (see Figure8).

If $\lambda (G)=2$, then by Proposition \ref{pro.lambda2} the graph $G$ is the paw, the complement of the banner, the 3-cycle $C_3$ or the butterfly, and these graphs are described, respectively, 
under (a) when $r=2$; (b) when $r=2$; (c) when $t=1$ and $r_1=2$ and (d) when $t=r_1=r_2=2$ (see Figure \ref{fig.block-cactusGuanyaComplementari}).

If $\lambda (G)\ge3$, by Proposition \ref{pro.globalImplica}, $G$ does not contain a global LD-code, and therefore it must be one of those graphs described in Theorem \ref{pro.noglobal}. 
Hence, it suffices to prove that the graphs described under items d) or e) with $t'>0$, in Theorem \ref{pro.noglobal}, do not satisfy $\lambda (\overline{G}) =\lambda (G)  + 1$.
The graph $G$ described in item d) satisfies $\lambda (G)=\lambda (\overline{G})=3$, since an LD-code of $G$ is the set containing the three vertices adjacent to the three vertices of degree 1 in $G$ and an LD-code of $\overline{G}$ is the set containing the three vertices adjacent to the three vertices of degree 3 in $G$.
Finally, if $G$ is one of the graphs described in item e) obtained from $t$ copies of complete graphs and $t'$ copies of corners, $t'\ge 1$, then the set of vertices including all but one vertex of each complete graph and the two vertices of degree 3 of each copy of the corner, is an LD-code of $G$. 
If we change exactly one of the vertices of degree 3 of a copy of the corner by the vertex of degree 2 in this copy, then we obtain an LD-code of $\overline{G}$. 
Therefore, $\lambda (G)=\lambda (\overline{G})= 2t'+ (r_1-1)+\dots +(r_t-1)$.
\end{proof}

\begin{corollary}
  Every tree $T$ other than $P_2$  satisfies $\lambda(\overline{T})\le \lambda (T)$.
\end{corollary}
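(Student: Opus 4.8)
The plan is to read this off as an immediate corollary of the classification theorem just proved, combined with the general bound $|\lambda(G)-\lambda(\overline{G})|\le 1$ of Theorem~\ref{cor.difuno}. The starting observation is that a tree is a block-cactus: every block of a tree is a bridge $K_2$, which is a complete graph, so the hypothesis ``block-cactus'' of the preceding theorem is satisfied by every tree of order at least $2$. Hence the preceding characterization applies verbatim, and the whole task reduces to sorting out which graphs in the list $(a)$--$(d)$ happen to be trees.

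First I would invoke Theorem~\ref{cor.difuno} to get $\lambda(\overline{T})\le\lambda(T)+1$ for every tree $T$, so that proving $\lambda(\overline{T})\le\lambda(T)$ is equivalent to excluding the equality $\lambda(\overline{T})=\lambda(T)+1$. The degenerate tree $T=K_1$ must be dispatched first by hand, since it falls outside the order-$\ge 2$ hypothesis of the classification theorem; here $\overline{K_1}=K_1$ and $\lambda(\overline{K_1})=\lambda(K_1)=1$, so the inequality holds trivially and I may assume $T$ has order at least $2$. I would then argue by contradiction: if $T\ne P_2$ were a tree of order at least $2$ with $\lambda(\overline{T})=\lambda(T)+1$, then by the preceding theorem $T$ would be isomorphic to one of the four families $(a)$--$(d)$.

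The key (and only substantive) step is to verify that none of those families is a tree except $P_2$ itself. This is a routine acyclicity check: in families $(a)$ and $(d)$ the apex vertex together with any two vertices of a clique $K_r$ (respectively $K_{r_i}$) with $r\ge 2$ spans a triangle $C_3$, while in $(b)$ and $(c)$ the complete graph $K_{r+1}$ with $r\ge 2$ already contains $K_3$; the single exception is $(c)$ with $r=1$, i.e.\ $K_2=P_2$. Since a tree is acyclic, every graph on the list other than $P_2$ contains a cycle and is therefore not a tree, giving the desired contradiction with $T\ne P_2$. I do not expect a genuine obstacle here—the argument is essentially bookkeeping—so the main point requiring care is simply the separate treatment of $K_1$ and the clean identification of $P_2$ as case $(c)$ with $r=1$. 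Combining the contradiction with Theorem~\ref{cor.difuno} then yields $\lambda(\overline{T})\le\lambda(T)$ for every tree $T\ne P_2$.
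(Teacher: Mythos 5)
Your proof is correct and follows exactly the paper's intended argument: the corollary is read off from the preceding characterization theorem (trees are block-cactus, and the only tree among the listed extremal graphs is $P_2$, namely case (c) with $r=1$), combined with the bound $|\lambda(G)-\lambda(\overline{G})|\le 1$. Your separate treatment of $K_1$ and the explicit acyclicity check are careful touches the paper leaves implicit, but the route is the same.
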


\begin{corollary}
  Every unicyclic graph  $G$ not beloging  to the set $\{C_3,\overline{P},{\rm paw}\}$ satisfies $\lambda(\overline{G})\le \lambda (G)$.
\end{corollary}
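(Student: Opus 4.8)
The plan is to reduce the statement to the characterization theorem proved immediately above and then to determine, by a simple edge count, which of its four listed families can actually be unicyclic. By Theorem \ref{cor.difuno} we know that $|\lambda(G)-\lambda(\overline G)|\le 1$ for every graph, so $\lambda(\overline G)\le \lambda(G)$ holds automatically unless $\lambda(\overline G)=\lambda(G)+1$. Hence it suffices to prove that no unicyclic graph outside $\{C_3,\overline P,\mathrm{paw}\}$ satisfies $\lambda(\overline G)=\lambda(G)+1$.

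First I would observe that a unicyclic graph is a connected block-cactus of order at least $3$, since it contains a cycle, which has at least three vertices, and all of its remaining blocks are bridges. Thus the preceding characterization theorem applies: among graphs of order at least $2$, the equality $\lambda(\overline G)=\lambda(G)+1$ holds exactly when $G$ is isomorphic to one of the families (a) $K_1\vee(K_1+K_r)$ with $r\ge 2$, (b) the graph obtained by joining a vertex of $K_2$ to a vertex of $K_{r+1}$ with $r\ge 2$, (c) $K_{r+1}$ with $r\ge 1$, or (d) $K_1\vee(K_{r_1}+\dots+K_{r_t})$ with $t\ge 2$ and all $r_i\ge 2$. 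The entire problem then becomes the finite combinatorial question: which members of these four families are unicyclic?

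The key step is to test unicyclicity through the identity $|E(G)|=|V(G)|$, which for a connected graph holds precisely when the graph has exactly one cycle. For family (a) one finds $|V|=r+2$ and $|E|=\binom r2+(r+1)$, so $|E|=|V|$ forces $\binom r2=1$, i.e.\ $r=2$, yielding the paw. For family (b), $|V|=r+3$ and $|E|=\binom{r+1}2+2$, so $|E|=|V|$ forces $\binom{r+1}2=r+1$, again giving $r=2$ and the graph $\overline P$. For (c), $K_{r+1}$ is unicyclic only when $r=2$, giving $C_3$; the value $r=1$ gives the tree $K_2$, which is acyclic, and every $r\ge 3$ gives a graph with several cycles. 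For (d), $|V|=1+\sum_i r_i$ and $|E|=\sum_i\binom{r_i}2+\sum_i r_i$, so $|E|=|V|$ would require $\sum_i\binom{r_i}2=1$; but each $r_i\ge 2$ gives $\binom{r_i}2\ge 1$ and $t\ge 2$, so this sum is at least $2$, and no graph of family (d) is unicyclic.

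Collecting the four cases, the only unicyclic graphs satisfying $\lambda(\overline G)=\lambda(G)+1$ are the paw, $\overline P$ and $C_3$; every other unicyclic graph must then satisfy $\lambda(\overline G)\le \lambda(G)$, which is exactly the claim. The hard part will be purely the bookkeeping in these edge counts, together with the care needed to classify the degenerate parameter values (notably $r=1$ in family (c)) as acyclic rather than unicyclic; once the characterization theorem and Theorem \ref{cor.difuno} are in hand, no further structural argument is required.
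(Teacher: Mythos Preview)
Your argument is correct and is precisely the intended reasoning: the paper states this corollary without proof because it follows at once from the characterization theorem just above together with Theorem~\ref{cor.difuno}, and you have spelled out exactly that deduction, adding the explicit edge-count check that picks out $C_3$, the paw and $\overline{P}$ as the only unicyclic members of the four families. Nothing further is needed.
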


\section{Further research}
This work can be continued in several directions. Next, we propose a few of them.

\begin{itemize}

\item In this work, we have completely solved the equality $\lambda (\overline{G}) =\lambda (G)  + 1$ for the block-cactus family. 
In \cite{oursbip}, a similar study has been done for the family of bipartite graphs. We suggest to approach this problem for other families of graphs, such as outerplanar graphs, chordal graphs  and cographs.

\item Characterizing those trees $T$  satisfying $\lambda (\overline{T})= \lambda (T)=\lambda_g (T)$.

\item We have proved that every tree other than $P_2$ and $P_5$, every cycle other than $C_3$ and $C_5$, and every complete bipartite graph satisfies the  equality $\lambda (G)=\lambda_g (G)$. 
We propose  to find other families of graphs with this same  behaviour.

\end{itemize}

\section*{Acknowledgements}
Research partially supported by projects
MTM2012-30951,
Gen. Cat. DGR 2009SGR1040,
ESF EUROCORES programme EUROGIGA-ComPoSe IP04-MICINN,
MTM2011-28800-C02-01,
Gen. Cat. DGR 2009SGR1387



\end{document}